\documentclass[12pt,a4paper]{article}

\usepackage{amsmath,amssymb,amsfonts,amsthm}
\usepackage{a4wide}
\usepackage{graphicx}

\usepackage[colorlinks=true,citecolor=black,linkcolor=black,urlcolor=blue]{hyperref}


\usepackage{enumerate,color}

\theoremstyle{plain}
\newtheorem{theorem}{Theorem}[section]
\newtheorem{lemma}[theorem]{Lemma}
\newtheorem{corollary}[theorem]{Corollary}

\newtheorem{observation}[theorem]{Observation}

\theoremstyle{definition}
\newtheorem{definition}[theorem]{Definition}

\newtheorem{conjecture}[theorem]{Conjecture}

\newtheorem{question}[theorem]{Question}

\theoremstyle{remark}
\newtheorem{remark}[theorem]{Remark}

\newcommand{\comment}[1]{}
\newcommand{\rbg}{restricted frame graph}
\newcommand{\rbgs}{restricted frame graphs}
\newcommand{\Rbg}{Restricted frame graph}

\newcommand{\next}{\textsc{next}}
\newcommand{\gssp}{graph-stable set pair}
\newcommand{\add}{\textsc{add}}
\newcommand{\join}{\textsc{join}}

\newcommand{\F}{\mathcal F}
\newcommand{\D}{\mathcal D}
\newcommand{\R}{\mathbb R}
\renewcommand{\S}{\mathcal S}


\title{Restricted frame graphs and a conjecture of Scott}

\author{J\'er\'emie Chalopin\thanks{LIF, CNRS \&
  Univ. Aix-Marseille, Marseille, France. Partially supported by ANR Project MACARON
(\textsc{anr-13-js02-0002}).}
\and%
Louis Esperet\thanks{G-SCOP, CNRS \&
  Univ. Grenoble Alpes, Grenoble, France. Partially supported by ANR Project Heredia
  (\textsc{anr-10-jcjc-0204-01}), ANR Project Stint
  (\textsc{anr-13-bs02-0007}), and LabEx PERSYVAL-Lab
  (\textsc{anr-11-labx-0025-01}).}
\and%
Zhentao Li\thanks{D\'epartement d'Informatique,
   \'Ecole Normale Sup\'erieure, Paris, France}
\and%
Patrice Ossona de Mendez\thanks{CAMS, CNRS \&
\'Ecole des Hautes \'Etudes en Sciences Sociales, Paris, France, and
IUUK, Charles University, Prague, Czech Republic. Partially supported by grant ERCCZ LL-1201,  by the European Associated Laboratory ``Structures in
Combinatorics'' (LEA STRUCO), 
and by ANR Project Stint \textsc{anr-13-bs02-0007}.}
}

\begin{document}

\maketitle


\begin{abstract}
Scott proved in 1997 that for any tree $T$, every graph with bounded
clique number which does not contain any subdivision of $T$ as an
induced subgraph has bounded chromatic number. Scott also conjectured
that the same should hold if $T$ is replaced by any graph $H$. Pawlik
{\it et al.}  recently constructed a family of triangle-free
intersection graphs of segments in the plane with unbounded chromatic
number (thereby disproving an old conjecture of Erd\H os). This shows
that Scott's conjecture is false whenever $H$ is obtained from a
non-planar graph by subdividing every edge at least once.

It remains interesting to decide which graphs $H$ satisfy Scott's
conjecture and which do not. In this paper, we study the construction
of Pawlik {\it et al.} in more details to extract more counterexamples
to Scott's conjecture. For example, we show that Scott's conjecture is
false for any graph obtained from $K_4$ by subdividing every edge at
least once.  We also prove that if $G$ is a 2-connected
multigraph with no vertex contained in every cycle of $G$, then any
graph obtained from $G$ by subdividing every edge at least twice is a
counterexample to Scott's conjecture.
\end{abstract}
\maketitle

\section{Introduction}

A class of graph is \emph{$\chi$-bounded} if there is a function $f$
such that every graph $G$ in the class satisfies $\chi(G) \le
f(\omega(G))$, where $\chi(G)$ is the chromatic
number and $\omega(G)$ is the clique number of $G$. It is well known
that the class of all graphs is not $\chi$-bounded~\cite{Mycielski}.

Gy\'arf\'as~\cite{gyarfas73} (see also~\cite{Gya87}) conjectured that for any tree $T$,
the class of graphs that do not contain $T$ as an induced subgraph is
$\chi$-bounded. This conjecture is still open, but Scott proved the
following topological variant in 1997 \cite{Sco97}: for any tree $T$,
the class of graphs that do not contain any subdivision of $T$ as an
induced subgraph is $\chi$-bounded. Scott conjectured that the same
property should hold whether $T$ is a tree or not. On the other hand,
it is easy to see that Gy\'arf\'as's conjecture is false if $T$ contains a cycle as there are graphs of arbitrarily high girth (ensuring no copy of $T$ appears) and high chromatic number~\cite{Erdos}.

\begin{conjecture}[Scott's conjecture \cite{Sco97}] 
  For any $H$, the class of graphs excluding all subdivisions of $H$
  as an induced subgraph is $\chi$-bounded.
\end{conjecture}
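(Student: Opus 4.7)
The plan is to attempt a proof of Scott's conjecture by induction on the cyclomatic number of $H$ (that is, $|E(H)| - |V(H)| + c(H)$ where $c(H)$ is the number of connected components). The base case, where $H$ has cyclomatic number zero and is therefore a forest, is exactly Scott's 1997 theorem applied componentwise: forbidding induced subdivisions of a disjoint union $T_1 \cup T_2$ is at least as restrictive as forbidding induced subdivisions of each $T_i$, so a $\chi$-bound for each tree yields one for the forest by a standard Ramsey-type combination. In the inductive step, $H$ contains a cycle, and we pick an edge $e = uv$ lying on a cycle, so that $H' = H - e$ has strictly smaller cyclomatic number and the class forbidding induced subdivisions of $H'$ is $\chi$-bounded by induction, say by a function $f_{H'}$.

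The bulk of the work is the inductive step. Given a graph $G$ with $\omega(G) \le k$ and $\chi(G)$ very large (as a function of $k$ and $H$), one first applies the inductive hypothesis iteratively. A standard potential-function or divide-and-conquer argument, of the sort Scott used, should allow us to extract from $G$ an induced subgraph of still-large chromatic number that contains an induced subdivision $S$ of $H'$ with designated vertices $u^\ast, v^\ast$ playing the roles of $u, v$, together with a large reservoir $R \subseteq V(G) \setminus V(S)$ of vertices having controlled attachments to $S$ (for instance, each adjacent to at most the endpoints $u^\ast, v^\ast$). The target is then to find inside $R \cup \{u^\ast, v^\ast\}$ an induced path $P$ from $u^\ast$ to $v^\ast$ of length at least that required by the subdivision of $e$ in $H$. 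Concatenating $P$ with $S$ would yield an induced subdivision of $H$ in $G$, contradicting the hypothesis on $G$.

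The core sub-lemma to establish would be a \emph{localised} version of Scott's theorem for paths: in any graph of sufficiently large chromatic number and bounded clique number, and for any two prescribed vertices $x, y$, there is an induced $x$--$y$ path of arbitrary prescribed length. A natural route is to use a levelling / BFS argument from $x$, find a level of large chromatic number, and recurse toward $y$. One then applies this sub-lemma inside the subgraph of $G$ induced by $R \cup \{u^\ast, v^\ast\}$, whose chromatic number must remain large because only a bounded-$f_{H'}(k)$ fraction of colours can be wasted on vertices attached to $S$ in too many ways.

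The main obstacle, and where I expect the strategy to break down, is controlling attachments of the induced path $P$ to $S$: even if $R$ is built so that individual vertices have few neighbours in $S$, the internal vertices of $P$ may collectively produce chords to $S$ that create shortcuts, destroying the subdivision. Overcoming this would require a very strong simultaneous version of the localised path lemma, in which one can prescribe not only the endpoints but the full attachment pattern along the path. It is precisely this attachment problem that the construction of Pawlik \emph{et al.}\ exploits, and so this step is the crux on which any eventual resolution of the conjecture for a given $H$ must turn.
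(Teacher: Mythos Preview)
The statement you are attempting to prove is a \emph{conjecture}, not a theorem, and the paper contains no proof of it. Quite the opposite: the entire paper is devoted to exhibiting counterexamples. It is shown (Corollary~\ref{cor:noscs} and the surrounding discussion) that Scott's conjecture is false whenever $H$ is, for instance, any $\ge\!1$-subdivision of $K_4$, or any $\ge\!2$-subdivision of a 2-connected multigraph with no feedback vertex. So there is no ``paper's own proof'' to compare against, and your proposal cannot succeed as written because the statement is false in general.

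You have in fact located the genuine obstruction yourself. The ``localised path lemma'' you propose --- that in a graph of large chromatic number and bounded clique number one can find an induced $x$--$y$ path of any prescribed length between two \emph{prescribed} vertices --- is false even in mild forms (take $x$ isolated, or a pendant), and the subsequent step of controlling the attachments of the path $P$ to the already-built subdivision $S$ is exactly what the Burling/Pawlik~\emph{et~al.} construction prevents. The restricted frame graphs studied here are triangle-free, have unbounded chromatic number, and provably contain no induced subdivision of the $1$-subdivision of $K_4$ (Theorem~\ref{th:noscs}); hence for that choice of $H$ your inductive step from $H' = H - e$ to $H$ can never be completed, regardless of how large $\chi(G)$ is taken. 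Your closing paragraph already concedes this, so the honest summary is: this is a proof \emph{sketch of why the naive induction fails}, not a proof of the conjecture.
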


A \emph{$\ge \! \! k$-subdivision} of a (multi)graph $G$ is a
graph obtained from $G$ by subdividing each edge at least $k$ times,
i.e. replacing every edge of $G$ by a path on at least $k+1$ edges.  A
recent result \cite{pawlik2014} (see \cite{PKK13} for a follow-up) shows that Scott's conjecture is
false whenever $H$ is a $\ge \! \! 1$-subdivision of a non-planar
graph. The proof is based on a construction of a family of
triangle-free intersection graphs of segments in the plane with
unbounded chromatic number (the existence of such graphs also
disproved a conjecture of Erd\H os). Since no $\ge \! \! 1$-subdivision
of a non-planar graph can be represented as the intersection of
arcwise connected sets in the plane (in particular, such subdivisions
cannot be represented as an intersection graph of line segments) no
such graph appears as an induced subgraph in the
construction. Therefore, graphs in the construction exclude all
subdivisions of 1-subdivisions of non-planar graphs as induced
subgraphs.  Hence, the 1-subdivision of any non-planar graph is a
counterexample to Scott's conjecture. 

Recently, Walczak~\cite{Wal14} showed how to slightly modify the
construction of \cite{pawlik2014,PKK13} to obtain a family of graphs with
no stable sets of linear size (in particular, with unbounded
fractional chromatic number). Therefore, the following stronger
result can be deduced: for any non-planar graph $H$, there exist
graphs with no $\ge \! \! 1$-subdivision of $H$ as an induced
subgraph, and with no stable sets of linear size.

Note that the construction of \cite{pawlik2014,PKK13} gives the same family
of graphs as a construction of Burling \cite{Bur65}, who proved in 1965 that
triangle-free intersection graphs of axis-parallel boxes in $\R^3$
have unbounded chromatic number.

\subsection*{Our results}

Our original goal was to characterize \emph{all} graphs $H$ such that
no subdivision of $H$ appears as an induced subgraph in the
construction of \cite{pawlik2014, PKK13} (this extended set of graphs
would then provide new counterexamples to Scott's conjecture
\cite{Sco97}). Unfortunately, our characterization is incomplete but
we still provide new counterexamples to Scott's conjecture. On the
other hand, we are able to give a complete characterization of all graphs
$H$ that are a $\ge \! \! 2$-subdivision of some multigraph, and such
that no subdivision of $H$ appears as an induced subgraph in the
construction.

A consequence of Pawlik {\it et al.}'s result \cite{pawlik2014,PKK13}
is that Scott's conjecture is false for any graph obtained from $K_5$
by subdividing every edge at least once.  We show that Scott's
conjecture is also false for any graph obtained from $K_4$ by
subdividing every edge at least once. Note that proving that Scott's conjecture
holds for any subdivision of $K_3$ is equivalent to a long standing
conjecture of Gy\'arf\'as~\cite{Gya87}, which remains open.  We also
prove that if $G$ is a 2-connected multigraph with no vertex
intersecting every cycle of $G$, then any graph obtained from $G$ by
subdividing every edge at least twice is a counterexample to Scott's
conjecture. As our focus is on the construction, we do not prove Scott's
conjecture is true for any particular graph, only that it cannot be proven
false using the construction in some cases.

Our proof uses the following remarkable aspect of Pawlik {\it et
  al.}'s \cite{PKK13} proof (see also \cite{KPW13}): The
graphs in the construction can be obtained not only as intersection
graphs of segments in the plane, but also as intersection of a wide range
of arcwise connected shapes in the plane. In this paper, we will use
the fact that graphs in the construction can be represented as \rbg s
(see Section~\ref{sec:rbg} for the definition).

Instead of focusing on the construction, we will focus on
triangle-free graphs $H$ such that no subdivision of $H$ can be
represented as a \rbg. These subdivisions do not appear as induced
subgraphs in the construction, so it follows that such graphs $H$ are
counterexamples to Scott's conjecture. It turns out that graphs in the
modified construction of Walczak~\cite{Wal14} can be obtained from
graphs in Pawlik {\it et al.}'s construction \cite{pawlik2014,PKK13,KPW13} by
adding twins. As the class of restricted frame graphs is stable by the operation of twin addition (see Remark~\ref{rem:twin}),  the graphs
$H$ we find are also counterexamples to a weaker version of Scott's
conjecture, where the chromatic number is replaced by the fractional
chromatic number.

In Section~\ref{sec:tfreerbg}, we characterize connected triangle-free
graphs without \emph{full star cutsets} (defined in that section)
which are \rbg s. Among other consequences, this characterization
directly implies that Scott's conjecture is also false for any graph
$H$ obtained from $K_4$ by subdividing every edge at least once. It also
implies that Scott's conjecture is false for any graph $H$ which is a
$\ge \! \! 2$-subdivision of a 2-connected multigraph $G$ with no
vertex contained in all cycles. We investigate these $\ge \! \! 2$-subdivisions in
more details in Section~\ref{sec:2sub}, where we show that for every
multigraph $G$, either all $\ge \! \! 2$-subdivisions of $G$ can be
represented as \rbg s, or none of them can (and it can be determined
in linear time whether $G$ satisfies the former or the latter
property).

\smallskip

While it might seem restrictive to study graphs that cannot be
represented as \rbg s (instead of graphs that do not appear in the
construction), it can be proven that in the case of $\ge \! \!
2$-subdivisions of multigraphs, this is not restrictive at all: for
every multigraph $G$, if some $\ge \! \!  2$-subdivision $H$ of $G$
can be represented as a \rbg, then $H$ appears as an induced subgraph in
the construction of Pawlik {\it et al.}
\cite{pawlik2014,PKK13,KPW13} and in the modified construction of
Walczak~\cite{Wal14}. So the construction can be thought of as
\emph{universal} for $\ge \! \!  2$-subdivisions of \rbg s. Details
about the construction and this final result are given in
Appendix~\ref{sec:cons}.

\section{\Rbg s}\label{sec:rbg}

As stated in the introduction, our proof relies on the analysis of the
following class of graphs.

\begin{definition}
An \emph{axis-parallel box} in $\R^2$ is the
cartesian product of two intervals of $\R$.
A \emph{frame} is the boundary of an axis-parallel box in $\R^2$.
\end{definition}

\begin{definition}\label{def:rbg}
  A graph $G$ is a {\em \rbg} if it is the intersection graph of a
  family of frames with
  these restrictions (see Figure~\ref{fig:inter} for the only allowed intersection).

  \begin{enumerate}
  \item
    Corners of a frame do not coincide with any point of another frame,
  \item
    the left side of any frame does not intersect any other frame,
  \item
    if the right side of a frame intersects a second frame, this right
    side intersects both the top and bottom of this second frame, and
  \item
    if two frames have non-empty intersection, then no frame is
  (entirely) contained in the intersection of the regions bounded by
  the two frames.
  \end{enumerate}
\end{definition}

\begin{figure}[htbp]
\centering \includegraphics[scale=0.5]{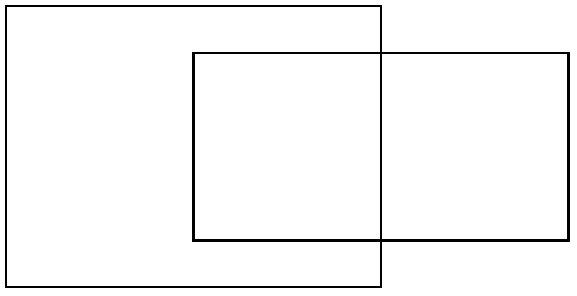}
\caption{The only possible intersection pattern between two frames in
  a \rbg.} \label{fig:inter}
\end{figure}

 A {\em representation} of a \rbg\ $G$ is a set of frames $\F=\{F_v\,|\,v
\in G\}$ where each $F_v$ is a frame and such that these frames
satisfy the above restrictions and $uv \in E(G)$ if and only if $F_u$
intersects $F_v$.

  We refer to $F_v$ as the {\em frame of $v$} and $v$ as the {\em
    vertex of $F_v$}.

\subsection*{Basic properties of \rbg s}

\begin{remark}
  As a consequence of restriction (1), when needed, we may assume all
  vertical sides (of all frames) occupy different $x$-coordinates and
  all horizontal sides (of all frames) occupy different
  $y$-coordinates.
\end{remark}

\begin{remark}
  A consequence of restriction (3) is that any frame which
  intersects the top edge of another frame also intersects the bottom
  edge of that frame.
\end{remark}

Figure~\ref{fig:inter} illustrates the only way two frames are allowed
to intersect in a representation of a \rbg. Note that by (4), no other
frame is allowed to be contained in the intersection of their
interior.

Two non-adjacent vertices $u$ and $v$ of a graph $G$ are said to be \emph{twins} if $N(u)=N(v)$.

\begin{remark}
\label{rem:twin}
If a graph $G$ has a restricted frame representation and if $v$ is a vertex of $G$, the graph obtained by adding a twin of $v$ to $G$  also has a restricted frame representation (where the twin is represented by a frame just inside the frame of $v$).
\end{remark}

\section{On triangle-free \rbg s}\label{sec:tfreerbg}

\subsection{Basic observations and a simple subclass of triangle-free \rbg s}

In this section, we characterize connected triangle-free \rbg s that cannot
be disconnected by the removal of the closed neighborhood of a
vertex. We first describe a simple subclass of \rbgs. 


A graph obtained from a tree $T$ by adding a vertex $v$ adjacent to
every leaf of $T$ is called a \emph{chandelier}. The vertex $v$ is
called the \emph{pivot} of the chandelier. If the tree $T$ has the
property that the neighbor of each leaf has degree two, then the
chandelier is a \emph{luxury} chandelier. Note that any subdivision of
a (luxury) chandelier is a (luxury) chandelier.

\begin{lemma}\label{lem:trees}
  Any chandelier is a \rbg.
\end{lemma}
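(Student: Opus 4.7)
The plan is to give an explicit geometric construction. I place $F_v$ as a tall, thin vertical frame on the far left, say $F_v = [-1, 0] \times [-N, N]$ for a constant $N$ chosen sufficiently large in terms of $|V(T)|$, and arrange the frames for the vertices of $T$ so that exactly the leaves of $T$ have their frames crossing the vertical line $\{x = 0\}$. Then $F_v$'s right side $\{0\} \times [-N, N]$ crosses the top and bottom of each leaf frame (adding the required edges from $v$ to the leaves of $T$) and does not touch any internal frame.

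To build the tree part of the representation, I root $T$ at an arbitrary vertex $r$ and use a recursive ``nested-rightward'' scheme. Each vertex $u$ is assigned a short, wide horizontal frame $F_u = [a_u, b_u] \times [c_u, d_u]$, and each child $c$ of $u$ in the rooted tree is placed so that $[c_c, d_c] \subset (c_u, d_u)$ (its $y$-range is strictly nested inside that of $u$) and $a_c < b_u < b_c$ (so that $F_u$'s right side crosses $F_c$'s top and bottom). Different child subtrees of a common vertex are placed in disjoint horizontal strips, so non-adjacent vertices do not meet. For each leaf $\ell$ of $T$, I then push $a_\ell$ leftward past $-1$ so that $F_\ell$ now crosses $\{x = 0\}$, while internal frames are kept entirely in $\{x > 0\}$.

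The main obstacle is verifying the four restrictions defining a \rbg\ after this leftward extension of the leaf frames. Restrictions~(1), (3) and~(4) are routine: (1) holds by a generic choice of coordinates, (3) because every intersection we created is a right side piercing a top and a bottom, and (4) because the rightward shift at each level of the tree prevents any frame from being entirely contained in the intersection of the regions bounded by two others. The delicate point is restriction~(2): each leaf's extended left side now sits at some $x < -1$, and it must not cross any other frame. I arrange this by placing the leaves' leftmost $x$-coordinates in pairwise distinct positions strictly less than $-1$ (so outside $F_v$'s $x$-range and outside every internal frame's $x$-range), and by taking $N$ large enough that no other frame's top or bottom sits in the $y$-range of a leaf's left side. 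Together these conditions give a valid \rbg\ representation of the chandelier, completing the induction.
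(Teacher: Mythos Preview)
Your construction has a genuine gap in the step where you push each leaf's left side past $x=-1$. Recall that in your nested-rightward scheme the $y$-range of every vertex is strictly contained in the $y$-range of each of its ancestors: $[c_\ell,d_\ell]\subset (c_p,d_p)\subset (c_g,d_g)\subset\cdots$ for a leaf $\ell$, its parent $p$, grandparent $g$, etc. All internal frames live in $\{x>0\}$, while after the extension the leaf's top and bottom sides are the horizontal segments $\{y=d_\ell\}$ and $\{y=c_\ell\}$ over the whole interval $[a_\ell,b_\ell]\supset [-1,b_\ell]$. For every proper ancestor $u$ of $\ell$ you have $0<a_u<b_u<b_\ell$, so both $a_u$ and $b_u$ lie in $(a_\ell,b_\ell)$, and $d_\ell,c_\ell\in(c_u,d_u)$. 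Hence the leaf's top and bottom now cross both the left side and the right side of \emph{every} ancestor $u$. This simultaneously (i) makes $\ell$ adjacent to all of its ancestors, not just its parent, so the intersection graph is no longer $T$; and (ii) violates restriction~(2) for each ancestor (and even for the parent $p$, whose left side now meets $F_\ell$). Your discussion of restriction~(2) only checks that the leaf's \emph{left side} avoids other frames; the problem is the leaf's horizontal sides sweeping back through the entire tree.

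The paper's construction sidesteps this by putting the pivot on the \emph{right} rather than the left: it builds the tree with the root's left side leftmost and the leaves' right sides rightmost, and then takes $F_v$ to be a large box containing all non-leaf frames, with its right side placed at an $x$-coordinate strictly between $\max_{u\text{ internal}} b_u$ and $\min_{\ell\text{ leaf}} b_\ell$. Then $F_v$'s right side crosses exactly the leaves' tops and bottoms, and no frame ever has to be stretched back across the rest of the representation. If you want to salvage your left-placed pivot, you would instead need to reverse the parent/child roles so that each \emph{child's} right side crosses its parent's top and bottom (i.e.\ $y$-ranges grow going down the tree); then the leaves, having the largest $y$-ranges and leftmost left sides, can be made to meet $F_v$ without passing through any ancestor.
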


\begin{proof}
  Any rooted tree can be represented in such a way that the frame for the root has the leftmost left side, and the frame for leaves have the rightmost right side (see
  Figure~\ref{fig:btree}).  Now we simply add the pivot $v$ as a large box
  intersecting exactly these leaves frames.
\end{proof}

\begin{figure}[htbp]
\centering
\includegraphics[scale=1]{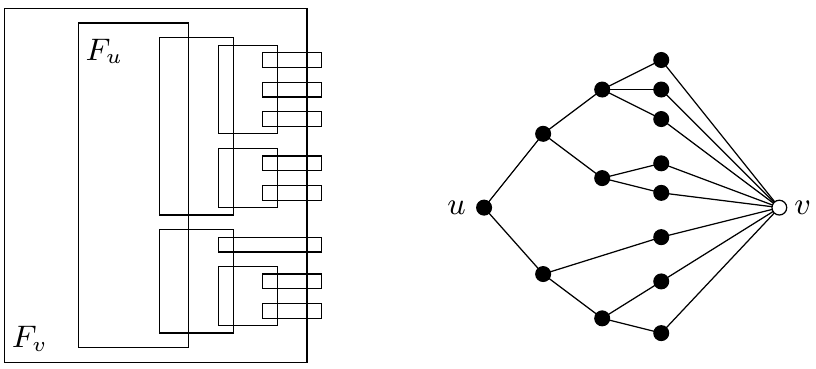}
\caption{A frame representation of a tree $T$ rooted in $u$ together
  with a vertex $v$ adjacent to each of the leaves of
  $T$.} \label{fig:btree}
\end{figure}

The remainder of this section is devoted to proving the converse of this
theorem for triangle-free graphs without full star-cutsets, which we
now define.

\medskip

A \emph{full star-cutset} in a connected graph $G$ is a set of
vertices $\{u\}\cup N(u)$ whose removal disconnects $G$. The
vertex $u$ is called the \emph{center} of the full star-cutset and the set
$\{u\}\cup N(u)$, denoted by $N[u]$, is called the \emph{closed
  neighborhood} of $u$.

\begin{observation}\label{obs-lux-path}
A tree $T$ has no full star-cutset if and only if $T$ is a path on at
most $4$ vertices. 

A chandelier has no full star-cutset if and only if it is a luxury
chandelier. 
\end{observation}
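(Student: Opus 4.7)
The plan is to prove both halves of the observation by direct case analysis.

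For the tree statement, I would first check by inspection that for $T \in \{P_1, P_2, P_3, P_4\}$ and every vertex $u$, the graph $T \setminus N[u]$ has at most one connected component, so no full star-cutset exists. For the converse, suppose $T$ is any other tree. If $T = P_n$ with $n \geq 5$, then any vertex at distance at least $2$ from both endpoints has its closed neighborhood separating the path into two nonempty arcs. Otherwise $T$ contains a vertex $u$ of degree at least $3$; if $u$ has two non-leaf neighbors $v_1, v_2$, then $N_T[u]$ itself is a full star-cutset, since any neighbor of $v_i$ other than $u$ survives the removal and the two branches are separated in $T$. Otherwise such a $u$ has at least two leaf neighbors $\ell_1, \ell_2$, and $N_T[\ell_1] = \{\ell_1, u\}$ then leaves $\ell_2$ isolated from any other surviving vertex (such a vertex exists since $\deg_T(u) \geq 3$), giving at least two components.

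For the chandelier statement, let $C$ be the chandelier based on a tree $T$ with pivot $v$. Suppose first that $C$ is not luxury, so some leaf $\ell$ of $T$ has a $T$-parent $p(\ell)$ with $\deg_T(p(\ell)) \geq 3$. Then $N_C[\ell] = \{\ell, p(\ell), v\}$ is a full star-cutset: once $v$ is deleted no pivot-to-leaf edges remain among the surviving vertices, so the induced subgraph on what is left equals $T[V(T) \setminus \{\ell, p(\ell)\}]$, which is the tree $T - \ell$ with the vertex $p(\ell)$ of $(T-\ell)$-degree $\geq 2$ removed, and hence has at least two nonempty components.

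Conversely, assume $C$ is a luxury chandelier and verify that $N_C[u]$ is never a cutset by splitting on the type of $u$. If $u = v$, what remains is $T$ minus all its leaves, a subtree (possibly empty), hence connected. If $u$ is a leaf of $T$, the luxury hypothesis forces $\deg_T(p(u)) = 2$, so $p(u)$ is a leaf of the tree $T - u$ and its deletion preserves connectivity. Finally, if $u$ is a non-leaf, non-pivot vertex of $T$, then $v \notin N_C[u]$ so the pivot survives; every component of $T - N_T[u]$ is a nonempty subtree which reaches some leaf of $T$ by following a maximal path outward from $u$, so $v$ is adjacent in the remaining graph to each such component and glues them all together into one. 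The only delicate point is this last subcase, where one explicitly uses that the pivot survives and that each surviving branch of $T - N_T[u]$ still contains an original leaf of $T$ to which $v$ is attached.
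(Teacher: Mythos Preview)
Your proof is correct and follows essentially the same approach as the paper's: both handle the chandelier statement by using a leaf with high-degree parent as the star-cutset center in the non-luxury case, and by a three-way case split (pivot, leaf, internal vertex) in the luxury case. The only minor difference is in the tree statement, where the paper dispatches the degree-$\ge 3$ case in one line by noting that \emph{any neighbor} of a high-degree vertex $v$ is the center of a full star-cutset, whereas you split further into the subcases ``$u$ has two non-leaf neighbors'' versus ``$u$ has two leaf neighbors''; both arguments work, but the paper's is slightly more economical.
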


\begin{proof}
For any tree $T$, if $T$ contains a vertex $v$ of degree at least $3$,
any neighbor of $v$ is the center of a full star-cutset. Since any
path with length at least $4$ has a full star-cutset, $T$ is a path on
at most $4$ vertices.

It is easily checked that a luxury chandelier has no full star-cutset, by considering successively the cases where the deleted vertex is the vertex $v$, a leaf of the tree (this is where the assumption that the chandelier is luxury is used), or any other vertex of the tree.

In a chandelier that is not a luxury chandelier, some leaf $v$ of the
tree has a parent of degree at least 3 and is therefore the center of a full
star-cutset in the chandelier.
\end{proof}

We now state the main result of this section, whose proof will be given in Section~\ref{sec:pf}, after all needed lemmas.

\begin{theorem}\label{th:noscs}
  Suppose $H$ is a connected triangle-free graph with no full
star-cutset and $H^*$ is a subdivision of $H$.  Then $H^*$ is a
\rbg\ if and only if $H$ is either a path on at most 4 vertices or a
luxury chandelier.
\end{theorem}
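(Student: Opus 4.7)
The ``if'' direction is immediate. If $H$ is a path on at most $4$ vertices, then $H^*$ is a path, directly representable as frames placed in sequence, each pierced by its predecessor's right side. If $H$ is a luxury chandelier, then $H^*$ is also a luxury chandelier (as noted earlier in the paper), hence a restricted frame graph by Lemma~\ref{lem:trees}.

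For the ``only if'' direction, fix a representation $\mathcal{F} = \{F_v\}$ of $H^*$ and orient each edge $uv$ as $u \to v$ when $F_u$'s right side pierces $F_v$'s top and bottom. This orientation is acyclic because $u \to v$ forces $F_u$ to be strictly taller (larger $y$-range) than $F_v$. Let $v^*$ be the vertex whose frame $F^*$ has the leftmost left side. One checks that $v^*$ is a source: if some $u \to v^*$, then $F_u$'s right side lies in the interior of $F^*$'s horizontal range, and by leftmostness of $F^*$'s left side so does $F_u$'s left side; since $F_u$ is strictly taller than $F^*$, this left side then crosses $F^*$'s top and bottom, violating restriction~(2).

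The crux is to show that \emph{every cycle of $H^*$ passes through $v^*$}. Suppose a cycle $C$ avoids $v^*$ and let $w^*$ be the vertex of $C$ with leftmost left side within $C$. The same source argument, restricted to $C$-vertices, shows that $w^*$ is a local source of $C$: its two cycle-neighbors are pierced by $F_{w^*}$'s right side, stacked vertically, and pairwise non-adjacent by triangle-freeness. Tracing $C$ around from $w^*$ and tracking how the piercing orientations are forced to alternate on successive edges, one argues that the two halves of $C$ cannot reconverge without either producing a triangle (forbidden by hypothesis) or forcing a frame into the intersection of two already-intersecting cycle-frames' regions (forbidden by restriction~(4)). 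This geometric step is the main technical content of the proof; it relies essentially on restrictions~(3) and~(4) and is precisely what the lemmas preceding the proof will supply.

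Granted this, $H^* - v^*$ is a forest. Using the no-full-star-cutset hypothesis on $H$ together with triangle-freeness, one concludes in turn: $v^*$ is not a cut vertex of $H$, so $H - v^*$ is a tree $T_H$; every $H$-neighbor of $v^*$ is a leaf of $T_H$ (an internal neighbor would, via triangle-freeness, force some pendant leaf to be a non-neighbor of $v^*$, ultimately yielding a full star-cutset); and every leaf of $T_H$ is adjacent to $v^*$ (else that leaf would itself be the center of a full star-cutset). Hence $H$ is a chandelier with pivot $v^*$, and Observation~\ref{obs-lux-path} upgrades it to a luxury chandelier using the no-full-star-cutset hypothesis. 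If $H^*$ has no cycles, then $H$ is a tree and Observation~\ref{obs-lux-path} again forces $H$ to be a path on at most $4$ vertices. The main obstacle throughout is the cycle-through-$v^*$ claim: the structural consequences above are combinatorial and follow cleanly, but the underlying geometric step requires a careful case analysis of how restrictions~(3) and~(4) propagate around a hypothetical cycle.
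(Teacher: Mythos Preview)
Your central claim---that every cycle of $H^*$ passes through the vertex $v^*$ with globally leftmost left side---is false, and the preceding lemmas do not supply it. Take a luxury chandelier whose tree has root $r$, a child $c$, a grandchild $d$, and below $d$ two disjoint length-$2$ paths to leaves $\ell_1,\ell_2$; add the pivot $p$ adjacent to all leaves. In the representation of Lemma~\ref{lem:trees} the root $r$ is $v^*$, yet the $6$-cycle $p,\ell_1,\dots,d,\dots,\ell_2,p$ lies entirely in the subtree below $c$ and avoids $r$. So the statement you are asking the lemmas to prove is simply not true, and your hand-wavy ``two halves cannot reconverge'' argument cannot be completed. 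A second, smaller gap: you freely treat $v^*$ as a vertex of $H$ (``$v^*$ is not a cut vertex of $H$''), but $v^*$ is defined in $H^*$ and may well be a subdivision vertex.

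The paper's route is genuinely different. Rather than a single global vertex meeting all cycles, the Cycle Lemma produces for each induced cycle $C$ a \emph{big vertex}---one whose frame contains the frames of all its non-neighbours on $C$---and Lemma~\ref{lem:bigv} then uses the no-full-star-cutset hypothesis (via the Path Corollary) to show that the big vertices of all cycles together form a clique, hence there are at most two of them. The proof (Lemma~\ref{lem:noscs}) splits into the one-big-vertex case, which gives a chandelier much as you sketch, and a separate two-big-vertex case, which is forced to be $C_4$ using restriction~(4). The passage from $H^*$ back to $H$ is handled cleanly by first transferring the no-full-star-cutset property to $H^*$ (Observation~\ref{obs:subdv_fsct}), applying Lemma~\ref{lem:noscs} to $H^*$ itself, and only then reading off the structure of $H$.
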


This result has the following direct consequence.

\begin{corollary}\label{cor:noscs}
Every connected triangle-free graph $H$ with no full star-cutset which
is neither a path on at most 4 vertices, nor a luxury chandelier is a
counterexample to Scott's conjecture.
\end{corollary}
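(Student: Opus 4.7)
The plan is short, because Corollary~\ref{cor:noscs} is an almost immediate consequence of Theorem~\ref{th:noscs} combined with the properties of the Pawlik {\it et al.} construction highlighted in the introduction. First I would invoke Theorem~\ref{th:noscs}: under the stated hypotheses on $H$ (connected, triangle-free, no full star-cutset, and neither a path on at most $4$ vertices nor a luxury chandelier), no subdivision $H^*$ of $H$ can be realised as a \rbg.

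Next I would check the (essentially trivial) point that being a \rbg\ is a hereditary property. If $G$ has a restricted frame representation $\{F_v\}_{v\in V(G)}$ and $S\subseteq V(G)$, then $\{F_v\}_{v\in S}$ is a valid representation of $G[S]$: conditions (1)--(3) in Definition~\ref{def:rbg} are local conditions on pairs of frames and so are preserved, while condition (4) only becomes easier to satisfy when frames are removed. Since the Pawlik {\it et al.} construction \cite{pawlik2014,PKK13,KPW13} produces triangle-free graphs that are \rbg s (as recalled in the introduction and at the start of Section~\ref{sec:rbg}), every induced subgraph of any graph in the construction is also a \rbg. Combined with the previous step, this means no subdivision of $H$ can appear as an induced subgraph in any graph of the construction.

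Finally, I would conclude by recalling that the Pawlik {\it et al.} family provides triangle-free graphs (so $\omega \le 2$) with arbitrarily large chromatic number. Hence the class of graphs excluding every subdivision of $H$ as an induced subgraph contains this family and is therefore not $\chi$-bounded, which is precisely the statement that $H$ is a counterexample to Scott's conjecture. There is no genuine obstacle here: all the real work has been pushed into Theorem~\ref{th:noscs}, and the only point one must verify is the hereditary closure observation above, together with the already-advertised fact that the Pawlik {\it et al.} graphs admit restricted frame representations.
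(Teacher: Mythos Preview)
Your proposal is correct and matches the paper's approach exactly: the paper states Corollary~\ref{cor:noscs} as a ``direct consequence'' of Theorem~\ref{th:noscs} without further proof, relying on precisely the chain of reasoning you spell out (no subdivision of $H$ is a \rbg, the Pawlik \emph{et al.} graphs are \rbg s, hence their induced subgraphs are too, and they form a triangle-free family of unbounded chromatic number). You have simply made explicit the hereditary-closure step and the appeal to the construction that the paper leaves to the reader.
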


\begin{figure}[htbp]
\centering
\includegraphics[scale=0.8]{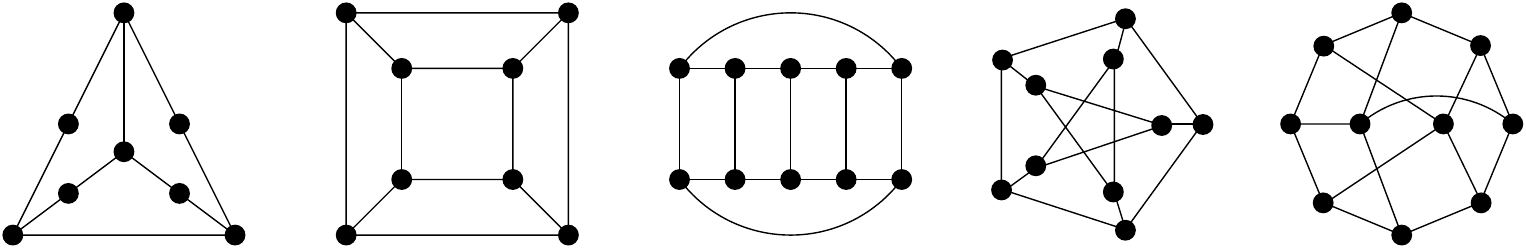}
\caption{Some small counterexamples to Scott's conjecture.} \label{fig:ex33}
\end{figure}

While the condition in Theorem~\ref{th:noscs} (and
Corollary~\ref{cor:noscs}) may seem technical, they still apply to a wide
range of graphs, including small ones. For example, all the graphs of
Figure~\ref{fig:ex33} (as well as their subdivisions) are counterexamples to
Scott's conjecture. The leftmost graph (the 1-subdivision of $K_4$), call
it $H$, is particularly interesting: any
triangle-free subdivision of $K_4$ which is not a subdivision of $H$
can be represented as a \rbg\ (see Appendix~\ref{sec:k4} for
details). On the other hand, such subdivisions have full
star-cutsets, so this shows that the technical condition is needed in
Theorem~\ref{th:noscs}. Extensions of Corollary~\ref{cor:noscs} remain open.

\smallskip

Note that cycles of length at least 5 are luxury chandeliers, so
Scott's conjecture for cycles of length at least 5, which is
equivalent to a conjecture of Gy\'arf\'as~\cite{Gya87}, remains open.

\subsection{Representations of \rbg s}

To prove Theorem~\ref{th:noscs}, we need some more properties of \rbg
s. We say that a frame $F_1$ {\em contains} a frame $F_2$ if $F_1$ and
$F_2$ do not intersect and $F_2$ is completely contained in the region
bounded by $F_1$.  If $F_1$ contains $F_2$, we also say that $F_2$ is
{\em inside} $F_1$.  If $F_1$ and $F_2$ do not intersect and $F_2$ is
not inside $F_1$, we say that $F_2$ is {\em outside} $F_1$.  Note that
 if two frames intersect,
none of these frames is inside another.

\begin{lemma}[Path Lemma]
  Let $\F=\{F_v\,|\,v\in G\}$ be a representation of a \rbg\ $G$. For
  any vertices $u,v,w \in G$ where $F_u$ is inside $F_v$ and $F_w$ is
  outside $F_v$ and any path $P$ from $u$ to $w$, $P$ either contains
  $v$ or has an edge to $v$.
\end{lemma}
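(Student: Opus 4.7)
The plan is to proceed by contradiction using the Jordan curve theorem applied to the frame $F_v$, which is a simple closed curve in $\R^2$ and therefore separates the plane into a bounded interior region (the ``inside'' of $F_v$) and an unbounded exterior region (the ``outside'' of $F_v$). The key observation is that any frame $F_x$ that does \emph{not} intersect $F_v$ is a connected subset of $\R^2 \setminus F_v$, so by the Jordan curve theorem it lies entirely in one of the two regions; in particular $F_x$ is either inside $F_v$ or outside $F_v$ in the sense defined just before the lemma.

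Suppose for contradiction that $P = x_0 x_1 \cdots x_k$ is a $u$-$w$ path with $x_0 = u$ and $x_k = w$ such that no $x_i$ equals $v$ and no $x_i$ is adjacent to $v$. By the definition of a representation, this means $F_{x_i} \cap F_v = \emptyset$ for every $i$, so each $F_{x_i}$ is either inside or outside $F_v$ by the observation above. By hypothesis $F_{x_0} = F_u$ is inside $F_v$ while $F_{x_k} = F_w$ is outside $F_v$, so there must be a smallest index $i$ such that $F_{x_i}$ is inside $F_v$ and $F_{x_{i+1}}$ is outside $F_v$. Since $x_i x_{i+1} \in E(G)$, the definition of representation gives $F_{x_i} \cap F_{x_{i+1}} \neq \emptyset$, and any point of this intersection would lie simultaneously in the interior and in the exterior of $F_v$, which are disjoint. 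This contradiction completes the proof.

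There is no real obstacle here beyond carefully invoking the Jordan curve theorem and checking that the notions of ``inside'' and ``outside'' from the definition coincide with the topological notions, which follows immediately from the fact that each frame is a connected set. Restrictions (2)--(4) of Definition~\ref{def:rbg} play no role in this argument; only the fact that frames are simple closed curves and that adjacency in $G$ corresponds to a non-empty intersection of the corresponding frames is used.
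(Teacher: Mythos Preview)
Your proof is correct and follows essentially the same approach as the paper's: both locate a pair of consecutive vertices on $P$ whose frames lie on opposite sides of $F_v$ and invoke the Jordan curve theorem to force an intersection with $F_v$. The only cosmetic difference is that you phrase the argument as a proof by contradiction (assuming no $x_i$ meets $F_v$), whereas the paper argues directly that some $F_y$ must intersect $F_v$; the underlying idea is identical.
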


\begin{proof}
  Assume that $P$ does not contain $v$. Since $F_u$ is inside $F_v$
  and $F_w$ is outside $F_v$, there are two consecutive vertices
  $x,y$ in $P$  such that $F_x$ is inside $F_v$ and $F_y$ is not inside
  $F_v$. Since $F_x$ and $F_y$ intersect, $F_y$ contains a curve
  connecting a point inside $F_v$ and a point outside $F_v$. It
  follows from Jordan's curve theorem that $F_y$ intersects
  $F_v$. Hence, $y$ is adjacent to $v$ in $G$.
\end{proof}

\begin{corollary}[Path Corollary]\label{pathcor}
  For any two frames $F_u, F_v$ in a representation $\F$ of a
  \rbg\ $G$ with $F_u$ inside $F_v$, all frames of vertices in the
  connected component of $G-N[v]$ containing $u$ are inside $F_v$.
\end{corollary}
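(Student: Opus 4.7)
The plan is to derive the corollary as a direct consequence of the Path Lemma, via a proof by contradiction. Let $w$ be any vertex in the connected component $C$ of $G-N[v]$ containing $u$, and let $P$ be any path from $u$ to $w$ inside $C$. By the definition of $C$, no vertex of $P$ lies in $N[v]$, i.e., $P$ does not contain $v$ and no vertex of $P$ is adjacent to $v$.

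Suppose, for contradiction, that $F_w$ is not inside $F_v$. Since $w\in C\subseteq G-N[v]$, the vertex $w$ is not adjacent to $v$, so $F_w$ and $F_v$ do not intersect. Combined with the assumption that $F_w$ is not inside $F_v$, this forces $F_w$ to be outside $F_v$. Now apply the Path Lemma to the triple $u,v,w$ and the path $P$: since $F_u$ is inside $F_v$ and $F_w$ is outside $F_v$, the path $P$ must either contain $v$ or have an edge to $v$. This contradicts the fact that $P$ avoids $N[v]$, establishing that $F_w$ is inside $F_v$.

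There is no real obstacle here; the corollary is essentially a rephrasing of the Path Lemma in the language of connected components of $G-N[v]$. The only point to be careful about is correctly extracting the dichotomy ``inside or outside'' from the non-adjacency of $w$ and $v$ (which rules out intersection) together with the assumption that $F_w$ is not inside $F_v$, so that the Path Lemma applies verbatim.
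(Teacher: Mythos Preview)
Your proof is correct and is exactly the intended derivation: the paper states the Path Corollary without proof, as an immediate consequence of the Path Lemma, and your argument spells out precisely that implication.
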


The Path Lemma has the following important consequence.

\begin{lemma}[Cycle Lemma]\label{lem-cycle}
Let $\F=\{F_v\,|\,v\in G\}$ be a representation of a \rbg\ $G$. For
any induced cycle $C$ of $G$, there is a vertex $v \in C$ such that
$F_v$ contains the frame of every vertex in $V(C) - N[v]$.

Moreover, if there exists another vertex $u \in C$ such that $F_u$
contains the frame of every vertex in $V(C) - N[u]$, then $u$ is a
neighbor of $v$.
\end{lemma}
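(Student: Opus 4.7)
The plan is to use an extremal argument combined with the Path Corollary. Let $v \in V(C)$ be the vertex whose frame $F_v$ has the leftmost left side (by the first remark in the section we may assume that all $x$-coordinates are distinct, so this $v$ is unique), and write the cycle as $v\,v_2\,v_3\,\dots\,v_{k-1}\,v_k\,v$. The first step is to argue that both $F_{v_2}$ and $F_{v_k}$ are \emph{lodged into} $F_v$ on its right side: the intersection pattern in Definition~\ref{def:rbg}(3) offers only two choices at $F_v\cap F_{v_i}$ for $i\in\{2,k\}$, and the alternative in which $F_{v_i}$'s right side passes through $F_v$ would place $F_{v_i}$'s left side strictly to the left of $F_v$'s, contradicting the choice of $v$. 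So $F_v$'s right side passes through both $F_{v_2}$'s and $F_{v_k}$'s top and bottom.

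Next, I would apply the Path Corollary to the induced subpath $v_3\,v_4\,\dots\,v_{k-1}$, which sits entirely in $G - N[v]$ (its vertices are non-neighbours of $v$ because $C$ is induced) and is connected. The corollary yields the dichotomy: either every $F_{v_j}$ with $3\le j\le k-1$ lies inside $F_v$ — exactly the conclusion we want, since $V(C)\setminus N[v] = \{v_3,\dots,v_{k-1}\}$ — or every one of them lies outside $F_v$.

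To rule out the ``all outside'' branch, I would analyse the intersection $F_{v_2}\cap F_{v_3}$: combining the two patterns of Definition~\ref{def:rbg}(3) with restriction~(2) and the fact that $F_{v_3}$ is disjoint from $F_v$, one checks that only the configuration in which $F_{v_2}$'s right side passes through $F_{v_3}$'s top and bottom is feasible, so $F_{v_3}$ lies entirely to the right of $F_v$'s right side with $y$-range nested inside $F_{v_2}$'s. Iterating this case analysis along the chain shows every $F_{v_j}$ ($3\le j\le k-1$) lies in the open right half-plane beyond $F_v$'s right side, and symmetrically $F_{v_{k-1}}$'s $y$-range is nested inside $F_{v_k}$'s. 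Because $F_{v_2}$ and $F_{v_k}$ are disjoint frames both lodged into $F_v$ (so their $x$-ranges overlap), their $y$-ranges within $F_v$ are disjoint, and hence $F_{v_3}$ and $F_{v_{k-1}}$ sit in distinct horizontal bands. A Jordan-curve argument in the right half-plane, bounded on the left by $F_v$'s right side, then uses restriction~(2) to control where each intermediate frame's left side can lie and produces the contradiction.

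Finally, for the ``moreover'' statement: if $u\neq v$ also satisfied the conclusion and were non-adjacent to $v$ in $C$, then applying the first statement to both $v$ and $u$ would simultaneously yield $F_u$ inside $F_v$ and $F_v$ inside $F_u$, which is impossible by antisymmetry of the nesting relation. Hence $u$ must be a cycle-neighbour of $v$. The main obstacle is closing out the third step: the extremal choice and the Path Corollary reduce the lemma to eliminating one specific geometric configuration, but turning that configuration into a clean contradiction requires a careful interplay of all four restrictions in Definition~\ref{def:rbg}.
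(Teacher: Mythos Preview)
Your ``moreover'' paragraph is fine and matches the paper. The difficulty is entirely in the first part, and it stems from your choice of extremal vertex.

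The paper does \emph{not} take the vertex with leftmost left side. Instead it lets $X_C$ be the set of cycle vertices whose frame's right side intersects another cycle frame, and picks $v\in X_C$ with \emph{rightmost right side}. This choice buys much more: if $u$ is the cycle-neighbour of $v$ with $F_v$'s right side through $F_u$, and $w$ is the other cycle-neighbour of $u$, then the maximality of $v$ forces the intersection $F_u\cap F_w$ to lie inside $F_v$ (otherwise either $u$ or $w$ would belong to $X_C$ with a right side further right). Since $|C|\ge 4$, $w$ is non-adjacent to $v$, so $F_w$ is entirely inside $F_v$, and the Path Corollary finishes immediately. There is no ``all outside'' case to eliminate.

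With your extremal choice (leftmost left side) you correctly obtain that both $F_{v_2}$ and $F_{v_k}$ are lodged into $F_v$, but you do not directly locate any non-neighbour of $v$ inside $F_v$; this is what forces you into step~3. That step is a genuine gap as written. The claim that the case analysis ``iterates along the chain'' to place every $F_{v_j}$ to the right of $F_v$'s right side is not justified: already at $j=4$ one of the two intersection patterns ($F_{v_4}$'s right side through $F_{v_3}$) requires a further nesting argument with $F_{v_2}$, and for larger $j$ the frames need not be adjacent to $v_2$ or $v_k$, so the induction hypothesis you would need is stronger than what you state. The final ``Jordan-curve argument in the right half-plane'' is only a sketch of an intention. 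Your outline can very likely be completed (the key is that $F_{v_3}$ and $F_{v_{k-1}}$ have vertical ranges in the disjoint bands of $F_{v_2}$ and $F_{v_k}$, and consecutive frames in the chain have nested vertical ranges, forcing a contradiction along the path), but the proof you have written does not yet do so, and the paper's extremal choice sidesteps all of this in two lines.
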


\begin{proof}
We may assume that $C$ contains at least 4 vertices, since otherwise
the result holds trivially for any vertex $v \in C$. We now show how
to find $v$. Among the subset $X_C$ of vertices of $C$ whose frame's
right edge intersects the frame of another vertex of $C$, pick $v \in
X_C$ with the largest $x$-coordinate for the right edge of its frame
$F_v$.

We claim that the frames of the vertices of $V(C) - N[v]$ are inside
$F_v$.  To see this, let $u\in C$ be a vertex whose frame intersects
the right edge of $u$, and let $w$ be the neighbor of $u$ on $C$ distinct
from $v$. By the maximality of $v$, the intersection of $F_w$ and
$F_u$ lies inside the region bounded by $F_v$ (otherwise we would have
chosen $u$ or $w$ instead of $v$). Since $C$ contains at least 4
vertices, $w$ is not adjacent to $v$ and therefore $F_w$ is inside
$F_v$. Since all elements of $V(C)-N[v]$ are in the same connected
component of $G-N[v]$ as $w$, the claim follows directly from
Corollary \ref{pathcor}.

Assume now that there exists another vertex $u \in C$ such that $F_u$
contains the frame of every vertex in $V(C) - N[u]$.  If $u$ and $v$
are not adjacent, then $F_u$ is inside $F_v$ and $F_v$ is inside
$F_u$, a contradiction.
\end{proof}

Given some representation $\F$ of $G$ as a \rbg\ and some induced cycle $C$ of $G$, we refer to a vertex
$v$ of $C$ whose frame contains every other frame of vertices of $C$
 non-adjacent to $v$ as a {\em big vertex of $C$}. The frame
$F_v$ is called a {\em big frame of $C$}.  The \emph{big vertices} of
$\F$ is the set of all big vertices for all cycles of $G$. Note that
these definitions depend on the chosen representation.  

\subsection{\texorpdfstring{$\ge \! \! 2$}{>=2}-Subdivisions of \texorpdfstring{$K_4$}{K4}}

To give a flavor of the proof of Theorem~\ref{th:noscs}, we show that
no $\ge \! \! 2$-subdivision of $K_4$ is a \rbg. Note that Theorem~\ref{th:noscs} implies the
stronger result that in fact, no $\ge \! \! 1$-subdivision of $K_4$ is a \rbg.

\begin{theorem}
No $\ge \! \! 2$-subdivision of $K_4$ is a \rbg.
\end{theorem}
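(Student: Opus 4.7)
The plan is to apply the Cycle Lemma to the four induced cycles of $H$ that correspond to the four triangles of $K_4$. For $\{x,y,z\} \subseteq \{a,b,c,d\}$, write $C_{xyz}$ for the cycle obtained by concatenating the three subdivided paths $P_{xy}, P_{yz}, P_{xz}$; these are induced because, in a $\ge \! \! 2$-subdivision, each branch vertex is adjacent only to the first interior vertex of each incident path, and those are already its cycle-neighbours. Fix a representation of $H$ as a \rbg, and let $v_1, v_2, v_3, v_4$ be big vertices of $C_{abc}, C_{abd}, C_{acd}, C_{bcd}$ respectively.

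The first step is to upgrade the Cycle Lemma to a global containment statement. I would verify by a short case analysis that $H - N[v]$ is connected for every vertex $v$ of $H$: if $v$ is a branch vertex, the three other branch vertices remain joined by the three untouched paths between them; if $v$ is an interior vertex of some path $P_{xy}$, then $N[v]$ deletes at most three consecutive vertices of $P_{xy}$ while the other five paths still connect $a,b,c,d$. Since $|V(C_i)| \ge 9$ while $|N[v_i] \cap V(C_i)| \le 3$, the set $V(C_i) \setminus N[v_i]$ is nonempty; picking any vertex in it and invoking the Path Corollary then upgrades the conclusion of the Cycle Lemma to the stronger statement that $F_{v_i}$ contains $F_u$ for \emph{every} vertex $u \in V(H) \setminus N[v_i]$.

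Next, I would show that the distinct elements of $\{v_1,v_2,v_3,v_4\}$ are pairwise adjacent in $H$. Indeed, if $v_i \ne v_j$ were non-adjacent, then $v_j \in V(H) \setminus N[v_i]$ and $v_i \in V(H) \setminus N[v_j]$, so the upgraded containment would force $F_{v_j}$ to lie inside $F_{v_i}$ and simultaneously $F_{v_i}$ to lie inside $F_{v_j}$, a contradiction. Since $H$ is triangle-free, at most two distinct vertices appear among $v_1,\ldots,v_4$.

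The proof finishes with a covering argument. If a single vertex $v$ is a big vertex of all four cycles, then $v \in \bigcap_i V(C_i)$; but any branch vertex of $H$ belongs to exactly three of the $C_i$ and any interior vertex of a path $P_{xy}$ belongs only to the two $C_i$ using $P_{xy}$, so this intersection is empty. Otherwise two adjacent vertices $u,w$ cover the four cycles, and adjacency in a $\ge \! \! 2$-subdivision forces $u$ and $w$ to lie on a common path $P_{xy}$ with at most one of them equal to a branch vertex (branch vertices are pairwise at distance at least three); then the cycle indexed by the triangle of $K_4$ not containing the branch endpoint occupied by $\{u,w\}$, or either of the two such cycles when both $u,w$ are interior to $P_{xy}$, contains neither $u$ nor $w$, contradicting that it admits a big vertex in $\{u,w\}$. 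I expect the main obstacle to be the connectivity verification for $H - N[v]$ and the careful bookkeeping of cycle-memberships for adjacent pairs lying on a common subdivided path; both steps are routine but require attention to the endpoint cases.
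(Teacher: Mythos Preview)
Your argument is correct, but it takes a longer route than the paper's. Both proofs share the same opening: apply the Cycle Lemma, verify that $H-N[v]$ is connected for every $v$, and use the Path Corollary to upgrade ``big vertex of a cycle'' to ``frame contains every non-neighbour's frame''; both then exploit the impossibility of mutual containment for non-adjacent big vertices.

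Where you diverge is in how the contradiction is produced. You fix the four triangle-cycles of $K_4$ in advance, collect their big vertices, argue (via triangle-freeness) that at most two distinct ones can occur, and then finish with a covering/case analysis showing that no single vertex and no adjacent pair can meet all four cycles. The paper instead picks \emph{one} cycle $C_1$, takes its big vertex $v_1$, and observes the key structural fact that $G-N[v_1]$ still contains a cycle $C_2$. The big vertex $v_2$ of $C_2$ then lies in $G-N[v_1]$ by construction, so $v_1$ and $v_2$ are automatically non-adjacent, and mutual containment gives the contradiction in one stroke. This sidesteps your entire covering argument and the cycle-membership bookkeeping.

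One minor point: your sentence ``$N[v]$ deletes at most three consecutive vertices of $P_{xy}$ while the other five paths still connect $a,b,c,d$'' is not literally true when $v$ is the interior vertex adjacent to a branch vertex $x$, since then $x\in N[v]$ and the paths through $x$ lose an endpoint. You flag this yourself as an ``endpoint case'', and indeed the remaining three branch vertices are still joined by the three paths among them with the truncated paths hanging off, so connectivity survives; just be sure to state it that way.
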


\begin{proof}
Suppose not and let $\F$ be a representation of a $\ge \! \!
2$-subdivision $G$ of $K_4$. By the Cycle Lemma, any subdivided cycle
$C_1$ in $G$ has a big vertex $v_1$. Observe that since $G$ is a
$\ge \! \! 2$-subdivision of $K_4$, $G-N[v_1]$ contains a cycle $C_2$
(with a big vertex $v_2$),  is connected, and
contains some vertex of $C_1$. By the Path Corollary, it follows that
$F_{v_1}$ contains the frame of all vertices of $G-N[v_1]$, including
$v_2$. By symmetry, $F_{v_2}$ also contains $F_{v_1}$, a
contradiction.
\end{proof}

\subsection{Triangle-free \rbg s}

Recall that the construction of \cite{pawlik2014,PKK13,KPW13} has no triangle,
and we try to give a precise characterization of graphs which appear
as induced subgraphs in the construction. Graphs that contain a
triangle are clearly not contained in the construction, so we can
restrict ourselves to the study of triangle-free \rbg s.

Another point
is that the disjoint union of two graphs $G$ and $H$ is a \rbg\ if and
only if $G$ and $H$ are \rbg s. (As we shall see, induced subgraphs of
the construction are also closed under taking disjoint union.)  So we
can also restrict ourselves to the study of connected graphs.

\begin{lemma}\label{lem:bigv}
Let $\F=\{F_v\,|\,v\in G\}$ be a representation of a connected
triangle-free \rbg\ $G$. If $G$ has no full star-cutset, the big vertices
of $\F$ form a clique of $G$. In particular, there are at most two big
vertices.
\end{lemma}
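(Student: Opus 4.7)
The plan is to use the Cycle Lemma together with the Path Corollary and the assumption that $G$ has no full star-cutset to show that any two big vertices must be adjacent. Once that is established, triangle-freeness immediately forces the set of big vertices to be a clique of size at most two.

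First, I would observe the following basic consequence of the hypotheses. Let $u$ be any big vertex of $\F$, big for some induced cycle $C_u$ of $G$. Since $G$ is triangle-free, $|C_u|\geq 4$, so $V(C_u)\setminus N[u]$ contains at least $|C_u|-3\geq 1$ vertices; by the definition of a big vertex, the frame of every such vertex lies inside $F_u$. Since $G$ has no full star-cutset, $G-N[u]$ is (nonempty and) connected. The Path Corollary then lets me propagate: starting from any vertex of $V(C_u)\setminus N[u]$ whose frame is inside $F_u$, every vertex in its component of $G-N[u]$ has its frame inside $F_u$. Connectedness of $G-N[u]$ thus yields the key fact that \emph{every} vertex of $G-N[u]$ has its frame inside $F_u$.

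Now suppose for contradiction that $u$ and $v$ are two big vertices of $\F$ that are not adjacent (and distinct). Apply the fact above to $u$: since $v\in G-N[u]$, the frame $F_v$ is inside $F_u$. By symmetry (applying the same reasoning to $v$, which is big for some cycle $C_v$, using again the no-full-star-cutset hypothesis), $u\in G-N[v]$, so $F_u$ is inside $F_v$. This is impossible, since no two frames can each be inside the other. Hence any two big vertices are adjacent, i.e.\ the big vertices form a clique. Since $G$ is triangle-free, this clique has at most two vertices, which proves the lemma.

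The main obstacle, which is dispatched by the setup above, is linking big vertices coming from \emph{different} induced cycles: a priori, being big for $C_u$ only controls the frames of vertices of $C_u\setminus N[u]$. The no-full-star-cutset hypothesis is exactly what turns this local control into global control over all of $G-N[u]$, via the Path Corollary, and this is what makes the symmetric inside/inside contradiction work.
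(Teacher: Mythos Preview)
Your proof is correct and follows essentially the same approach as the paper: use triangle-freeness to ensure each big vertex $u$ has some vertex of its cycle in $G-N[u]$, use the no-full-star-cutset hypothesis together with the Path Corollary to conclude that all of $G-N[u]$ has its frames inside $F_u$, and then derive the symmetric inside/inside contradiction for two non-adjacent big vertices. Your version is in fact slightly more streamlined than the paper's, which separately invokes the Cycle Lemma for the case where $u$ and $v$ are big for the same cycle before handling different cycles; your intermediate ``key fact'' subsumes both cases at once.
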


\begin{proof}
Note that by Lemma~\ref{lem-cycle}, if a cycle $C$ has two big
vertices in $\F$, they are adjacent. Consider now two distinct cycles
$C_u$ and $C_v$ of $G$ and let $u$ be a big vertex of
$C_u$ in $\F$ and $v$ a big vertex of $C_v$ in $\F$, with $u \ne v$.  Assume for the sake of
contradiction that $u$ and $v$ are not adjacent. Since $G-N[v]$ is
connected, $u$ and some (remaining) vertex of $C_v$ are in the same
connected component of $G-N[v]$ (since $G$ is triangle-free, $C_v$ has
length at least four and so $V(C_v)-N[v]$ is non-empty). By the Path
Corollary, $F_v$ contains $F_u$. A symmetric argument yields that
$F_v$ is also contained in $F_u$, which is a contradiction.
\end{proof}

We conclude this subsection with two easy observations on
triangle-free graphs with no full star-cutset.

\begin{observation}\label{obs:fsct}
Let $G$ be a connected triangle-free graph with no full
star-cutset. Then $G$ has no cut-vertex of degree at least
three. Moreover if $G$ is not a path with at most 4 vertices, then $G$
has minimum degree at least 2.
\end{observation}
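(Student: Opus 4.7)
The plan is to prove the two statements separately, with the second built directly on top of the first. Throughout, triangle-freeness enters mainly to guarantee that any two neighbors of a common vertex are non-adjacent, and the no-full-star-cutset hypothesis is used as a source of contradictions by exhibiting cutsets of the form $N[w]$.

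For the first claim, I would assume for contradiction that $G$ has a cut-vertex $v$ with $\deg(v) \geq 3$, and let $A_1, \dots, A_k$ ($k \geq 2$) be the components of $G - v$. Each $A_i$ contains at least one neighbor of $v$ (otherwise $A_i$ would be disconnected from the rest in $G$), and since $\deg(v) \geq 3$, either (a) some component $A_i$ contains two distinct neighbors $u_1, u_2$ of $v$, or (b) $k \geq 3$. In case (a), I would show $N[u_1]$ is a full star-cutset: it contains $v$, so after its removal the components $A_j$ with $j \neq i$ survive intact (since $N(u_1) \setminus \{v\} \subseteq A_i$), and $u_2$ also survives, because triangle-freeness gives $u_1 u_2 \notin E(G)$. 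So $u_2$ and the vertices of any $A_j$ ($j \neq i$) lie in different components of $G - N[u_1]$. In case (b), for any neighbor $u$ of $v$, with $u \in A_1$ say, the set $N[u]$ contains $v$ and leaves all $A_j$ with $j \neq 1$ untouched; since $k - 1 \geq 2$, this already disconnects $G$.

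For the second claim, suppose for contradiction that $G$ is not a path on at most $4$ vertices but has a vertex $v$ with $\deg(v) \leq 1$. Connectivity forces $\deg(v) = 1$ (otherwise $G = \{v\}$, a path on $1$ vertex). I would then walk along the unique path from $v$, applying the first claim at each step. Let $u$ be the neighbor of $v$. If $|V(G)| \geq 3$ then $u$ is a cut-vertex separating $v$ from the rest, so by the first claim $\deg(u) \leq 2$, hence $\deg(u) = 2$, with second neighbor $u'$. Iterating, if $|V(G)| \geq 4$ then $u'$ is a cut-vertex separating $\{u, v\}$ from the rest, so $\deg(u') = 2$, with second neighbor $u''$. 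If $|V(G)| \geq 5$, I would then observe that $N[u'] = \{u, u', u''\}$ is a full star-cutset: removing it isolates $v$ (its only neighbor $u$ is gone), while at least one other vertex remains (by $|V(G)| \geq 5$), producing at least two components. This contradicts the hypothesis, so $|V(G)| \leq 4$ and $G$ is precisely one of the paths on $1$, $2$, $3$, or $4$ vertices constructed along the way.

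The only subtle point, and the one place the argument could go wrong if handled carelessly, is verifying in case (a) of the first claim that $u_2$ and the vertices of $A_j$ ($j \neq i$) are genuinely separated in $G - N[u_1]$ after removing $v$: this relies on the fact that any $u_2$--$A_j$ path in $G$ must pass through $v$, which is itself removed because $v \in N[u_1]$. This is the main obstacle, and triangle-freeness is precisely what prevents $u_2$ from accidentally being swallowed into $N(u_1)$.
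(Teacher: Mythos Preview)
Your proof is correct and follows essentially the same approach as the paper: the first claim is handled identically (splitting on whether some component of $G-v$ contains two neighbors of $v$, and using triangle-freeness to keep $u_2$ alive), and the second claim likewise walks along the path emanating from the degree-one vertex. The only cosmetic difference is that the paper extends this path maximally and then applies the first claim at its far endpoint (a cut-vertex of degree $\ge 3$), whereas you stop after two steps and exhibit $N[u']$ directly as a full star-cutset; both finish the argument equally well.
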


\begin{proof}
Assume that $v$ is a cut-vertex of degree at least 3. If $G-v$ has
more than two components, then removing the closed neighbourhood of
any neighbor $u$ of $v$ removes $v$ and at most one component of $G-v$
and so $u$ is the center of a full star-cutset, a
contradiction. Otherwise $G- v$ has precisely two components, and one
of the two components contains at least two neighbors $u$ and $w$ of
$v$, while the other contains at least one neighbor $t$ of $v$.  Since
$G$ is triangle-free, $u$ and $w$ are not adjacent, and so removing
$N[u]$ removes $v$ and disconnects $w$ from $t$. Thus, $N[u]$ is a
full star-cutset, which is a contradiction.

So $G$ contains no cut-vertex of degree at least 3.

Now assume that $G$ is not a path with at most four vertices. If $G$
contains a vertex $x$ of degree one, let $P$ be a maximal induced path
starting at $x$ and such that all internal vertices have degree
two. Let $y$ be the other end of $P$. Since $G$ is neither of path
with at most 4 vertices, nor a path with more than 4 vertices (such
graphs contain a full star-cutset), the vertex $y$ has degree at least
3. Moreover, $y$ is a cut-vertex separating $P$ from its other
neighbors, a contradiction to $G$ having no such vertex.
\end{proof}

\begin{observation}\label{obs:subdv_fsct}
If $G$ is a connected triangle-free graph with no full star-cutset and
$G$ is not a path on at most 4 vertices, then any subdivision $G^*$ of
$G$ is also a (connected triangle-free) graph with no full
star-cutset.
\end{observation}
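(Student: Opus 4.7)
The plan is to verify the three asserted properties. Connectedness of $G^*$ is immediate and triangle-freeness is a routine check (a triangle in $G^*$ would reduce either to a triangle in $G$ or to three pairwise adjacent vertices on a common subdivided path of length at most two, both of which are impossible). The substance is to show that $G^*$ has no full star-cutset, which I would prove by contradiction: assume $N_{G^*}[v^*]$ is a cutset for some $v^*\in V(G^*)$ and split on whether $v^*$ is a branch vertex of $G$ or a subdivision vertex.

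\emph{Branch vertex case.} Let $v^*=v\in V(G)$ and let $R=\{v\}\cup\{u\in N_G(v):\ vu\text{ unsubdivided in }G^*\}$ be the set of branch vertices in $N_{G^*}[v]$. Every remaining subdivision vertex of $G^*-N_{G^*}[v]$ lies on a partial subdivided path attached to a branch vertex of $V(G)\setminus R$; triangle-freeness forbids any edge of $G$ with both endpoints in $R$, ruling out any component consisting solely of subdivision vertices. Hence the components of $G^*-N_{G^*}[v]$ correspond bijectively to those of $G-R$, and it suffices to show that $G-R$ is connected. By the hypothesis on $G$, the subgraph $G-N_G[v]$ is connected and, by Observation~\ref{obs:fsct} combined with the assumption that $G$ is not a short path, nonempty. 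Each additional vertex $u\in N_G(v)\setminus R$ has a neighbor $w\neq v$ (minimum degree $\ge 2$), and $w\notin N_G(v)$ by triangle-freeness, so $w\in V(G-N_G[v])$ and $u$ attaches to the main component---contradicting the cutset assumption.

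\emph{Subdivision vertex case.} Suppose $v^*=p_i$ lies interior to the path $P_e=a\,p_1\cdots p_{k-1}\,b$ of an edge $e=ab$ of $G$; then $N_{G^*}[p_i]=\{p_{i-1},p_i,p_{i+1}\}$. I would split into three sub-cases by the types of $p_{i-1}$ and $p_{i+1}$. If neither is a branch vertex, the deletion merely cuts $P_e$ internally, so a disconnection forces $e$ to be a bridge of $G$; Observation~\ref{obs:fsct} and minimum degree $\ge 2$ then make $a$ and $b$ degree-$2$ cut vertices with other neighbors $c$ and $d$ (distinct by triangle-freeness), and a direct check shows that $N_G[a]=\{a,b,c\}$ is itself a full star-cutset of $G$---contradiction. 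If exactly one of them, say $a$, is a branch vertex, components of $G^*-N_{G^*}[p_i]$ correspond to components of $G-a$, so $a$ would be a cut vertex; Observation~\ref{obs:fsct} forces $\deg(a)=2$, and applying no-full-star-cutset at both neighbors of $a$ together with triangle-freeness collapses $G$ to a path on $5$ vertices, which itself has a full star-cutset---contradiction. Finally, if both $p_{i-1}=a$ and $p_{i+1}=b$ are branch vertices, components correspond to components of $G-\{a,b\}$, so $\{a,b\}$ would separate $G$; invoking no-full-star-cutset at both $a$ and $b$ together with $N_G(a)\cap N_G(b)=\emptyset$ (by triangle-freeness and the edge $ab$) reduces $G$ to the $4$-vertex path $u-b-a-v$---again a contradiction.

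The main obstacle I anticipate is the coordinated use of the no-full-star-cutset hypothesis at several vertices, combined with triangle-freeness, to force every hypothetical small separator or degree-$2$ cut vertex of $G$ to collapse $G$ onto a path on at most five vertices---which is exactly the case excluded by hypothesis. Once this interplay is set up, the branch-vertex case reduces cleanly to connectivity of $G-N_G[v]$ and the subdivision-vertex case to structural constraints on bridges, $2$-cuts and degree-$2$ cut vertices of $G$.
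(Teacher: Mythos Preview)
Your argument is correct in substance but proceeds quite differently from the paper. The paper argues by induction on the number of subdivided edges: it suffices to treat the case where $G^*$ arises from $G$ by subdividing a single edge $uv$ once, introducing a vertex $w$. One then observes that for any vertex $x$ not adjacent to $u$ (or, symmetrically, to $v$) in $G$, the closed neighborhoods $N_{G^*}[x]$ and $N_G[x]$ cut the graph in the same way; since triangle-freeness forbids common neighbors of $u$ and $v$, only $u$, $v$, and $w$ remain to be checked. The case of $w$ is essentially your sub-case~3, and the cases of $u,v$ are dispatched via the minimum-degree part of Observation~\ref{obs:fsct}. This inductive reduction is shorter and avoids your three-way split in the subdivision-vertex case; conversely, your direct approach makes the correspondence with bridges, cut-vertices, and $2$-cuts of $G$ explicit, and your branch-vertex reduction to connectivity of $G-R$ is a clean statement in its own right.

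One small inaccuracy: in your sub-case~2 the conclusion ``collapses $G$ to a path on $5$ vertices'' is not what actually occurs. Once $a$ is a cut-vertex of degree~$2$ with neighbors $b,c$, the condition that $N_G[b]$ is not a cutset forces the entire component $B$ of $G-a$ containing $b$ to lie inside $N_G[b]$; triangle-freeness then makes $B$ a star centered at $b$, and every leaf of that star has degree~$1$ in $G$, contradicting Observation~\ref{obs:fsct} directly. So the sub-case closes before any $5$-vertex path appears, but along exactly the lines you indicate.
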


\begin{proof}
Using induction, it is enough to prove this claim when $G^*$ is
obtained from $G$ by subdividing some edge $uv$ once, adding a new
vertex $w$. Note that for any non-neighbor $x$ of $u$ in $G$ (distinct from $u$), if
$N[x]$ is a cutset in $G^*$, then $N[x]$ is a cutset in $G$. Indeed,
the only difference between the connected components is that in $G^*-
N[x]$, $w$ is added to the connected component of $u$ in $G - N[x]$.
It follows that non-neighbors of $u$ (and by symmetry, non-neighbors
of $v$) in $G$ are not centers of star-cutsets in $G^*$. Since $G$ is
triangle-free, $u$ and $v$ have no common neighbors in $G$. Therefore, it
only remains to check that $u,v$ and $w$ are not centers of full
star-cutsets in $G^*$.

If $G^*-N[u]$ is disconnected then since $G-N[u]$ is connected,
$G^*-N[u]$ has a component containing only $v$. But since $u$ and $v$
have no common neighbor in $G$, it follows that $v$ has degree one in
$G$, which contradicts Observation~\ref{obs:fsct}. Hence, $u$ (and by
symmetry, $v$) is not the center of a full star-cutset in $G^*$.

Finally, if $G^*-N[w]=G-\{u,v\}$ is disconnected then since $G-N[u]$
and $G-N[v]$ are connected, there is no vertex outside $N[u]\cup
N[v]$. Since $G$ is triangle-free, it follows that $N(u)-v$, $N(v)-u$,
$\{u,v\}$ form a partition of $V(G)$, and since neither $N[u]$, nor
$N[v]$ is a cutset in $G$, each of these sets induces a connected
subgraph in $G$. Since $G-\{u,v\}$ is disconnected, there is no edge
between $N(u)-v$ and $N(v)-u$ in $G$. Therefore, $u$ and $v$ are
cut-vertices in $G$ and so by Observation~\ref{obs:fsct} they have
degree at most two. Hence, $G$ is a path of length at most 4, which is
a contradiction.
\end{proof}

\subsection{Proof of Theorem~\ref{th:noscs}}
\label{sec:pf}
We are now ready to prove Theorem~\ref{th:noscs}. To simplify the
presentation, we will prove the following lemma.

\begin{lemma}\label{lem:noscs}
Assume that $H$ is a connected triangle-free graph with no full
star-cutset.  If $H$ is a \rbg\ then $H$ is either a path or a chandelier.
\end{lemma}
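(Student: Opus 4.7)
The plan is to split on whether $H$ contains a cycle. If $H$ is acyclic, then $H$ is a tree, and Observation~\ref{obs-lux-path} immediately gives that $H$ is a path on at most four vertices, which is one of the desired outcomes. Otherwise I fix a restricted frame representation $\F$ of $H$; by the Cycle Lemma the set of big vertices of $\F$ is non-empty, and by Lemma~\ref{lem:bigv} it forms a clique of $H$, hence has size at most two (as $H$ is triangle-free). I treat the one-big-vertex and two-big-vertices cases separately and show that in each $H$ is a chandelier.

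Suppose first that $\F$ has a unique big vertex $v$. Then every induced cycle of $H$ has its big vertex forced to equal $v$, so every cycle of $H$ passes through $v$ and $H - v$ is acyclic. Observation~\ref{obs:fsct} forbids cut-vertices of degree $\ge 3$, while a degree-$\le 2$ cut-vertex cannot lie on any cycle of $H$; together with the existence of a cycle, this shows $v$ is not a cut-vertex, so $T := H - v$ is a tree. Since $H$ is not a short path, Observation~\ref{obs:fsct} gives minimum degree two, forcing every leaf of $T$ into $N(v)$. Conversely, if some $w \in N(v)$ had $\deg_T(w) \ge 2$, then $H - N[w] = T - N_T[w]$ would decompose into the subtrees $T_i - a_i$ of $T - w$, and the no-full-star-cutset hypothesis would force all but at most one $a_i$ to be a leaf of $T$; any such leaf lies in $N(v)$, producing a triangle $\{v,w,a_i\}$, a contradiction. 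Hence $N(v)$ equals the leaf set of $T$ and $H$ is a chandelier with pivot $v$.

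Now suppose $\F$ has two (adjacent, by Lemma~\ref{lem:bigv}) big vertices $v_1, v_2$. Partition $V(H) \setminus \{v_1,v_2\}$ as $A_i := N(v_i) \setminus \{v_{3-i}\}$ for $i \in \{1,2\}$ and $A_3 := V(H) \setminus N[v_1] \setminus N[v_2]$; triangle-freeness makes this a valid partition and renders $A_1, A_2$ independent. Each $v_i$ is big for some induced cycle $C$ of length $\ge 4$, so $V(C) \setminus N[v_i]$ is non-empty; combining the Path Corollary with the connectivity of $H - N[v_i]$ (from the no-full-star-cutset hypothesis) shows every frame of $H - N[v_i]$ lies inside $F_{v_i}$. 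In particular each frame of $A_3$ would lie inside both $F_{v_1}$ and $F_{v_2}$, which restriction~(4) on the intersecting pair $F_{v_1}, F_{v_2}$ forbids, so $A_3 = \emptyset$. Then $H - N[v_1] = A_2$ is an independent set whose required connectivity forces $|A_2| \le 1$, and symmetrically $|A_1| \le 1$. Hence $|V(H)| \le 4$, and since $H$ contains a cycle of length at least four, we must have $H = C_4$, which is a chandelier.

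The main obstacle will be the two-big-vertices case. A tempting first attempt --- to argue directly that there can be no edges between $A_1$ and $A_2$, so that $H - v_i$ becomes tree-like --- fails already for $C_4$ itself, where both $v_1, v_2$ are big and an $A_1$--$A_2$ edge is present. The route that does work sidesteps this by extracting from triangle-freeness the independence of $A_1$ and $A_2$, then using restriction~(4) to eliminate $A_3$ and the no-full-star-cutset hypothesis to cap $|A_1|$ and $|A_2|$ at one; this collapses the whole subcase to $H = C_4$ and completes the proof.
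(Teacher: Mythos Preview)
Your proof is correct and follows essentially the same approach as the paper's: both split on whether $\F$ has one or two big vertices, handle the two-big-vertices case identically (via the Path Corollary, connectivity of $H-N[v_i]$, and restriction~(4) to force $|V(H)|\le 4$), and in the one-big-vertex case show $H-v$ is a tree whose leaf set equals $N(v)$. Your one-big-vertex argument is organized slightly differently---you rule out a non-leaf neighbour $w\in N(v)$ by extracting a triangle from the no-full-star-cutset hypothesis, whereas the paper notes that two attachments from $w$ into $H-N[v]$ would create a cycle avoiding $v$---but this is a cosmetic variation within the same strategy.
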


We first prove our theorem assuming this lemma.

\medskip

\begin{proof}[Proof of Theorem~\ref{th:noscs}.]
Let $H$ be a connected triangle-free graph with no full star-cutset,
and let $H^*$ be a subdivision of $H$.

If $H$ is a path or a chandelier, then $H^*$ is also a
path or a chandelier, and it follows from Lemma~\ref{lem:trees} that
$H^*$ is a \rbg.

Conversely, suppose $H^*$ is a \rbg. We may assume $H$ is not a path
on at most 4 vertices or we already have the desired conclusion.  By
Observation~\ref{obs:subdv_fsct}, $H^*$ is also a connected
triangle-free graph with no full star-cutset. So by
Lemma~\ref{lem:noscs}, $H^*$ is a path or a chandelier. It follows
that $H$ is also a path or a chandelier. Since $H$ has no full
star-cutset, by Observation~\ref{obs-lux-path}, $H$ is a path on at
most 4 vertices or a luxury chandelier.
\end{proof}

It remains to prove Lemma~\ref{lem:noscs}.

\begin{proof}[Proof of Lemma~\ref{lem:noscs}.] Let $H$ be a connected
triangle-free graph with no full star-cutset that is not a
path. Assume that $H$ is a \rbg, and let $\F=\{F_v\,|\,v\in H\}$ be a
representation of $H$. By Lemma~\ref{lem:bigv}, $H$ has at most two
big vertices in $\F$. By Observation~\ref{obs:fsct}, $H$ has minimum
degree at least two. It follows that $H$ contains a cycle and
therefore $H$ has at least one big vertex in $\F$.

If there is exactly one big vertex $u$ in $H$, then $H-\{u\}$ is a
forest, and since $N[u]$ is not a cutset of $H$, $H-N[u]$ is a tree
$T'$. Observe that every neighbor $v$ of $u$ has exactly one neighbor
in $T'$, for if $v$ had two neighbors $v_1,v_2$ in $T'$, then the path
between $v_1$ and $v_2$ in $T'$ together with the vertex $v$ would
form a cycle in $H$ not containing $u$, contradicting the fact
$H-\{u\}$ is a forest. As $H$ is triangle-free, it follows that $H-u$ is a tree $T$, and as $H$ has minimum degree $2$, the leaves of $T$ are exactly the neighbors of $u$.
 This proves that $H$ is a chandelier, as desired.

We may now assume $H$ has precisely two big vertices $u$ and $v$ in
$\F$. Then $u$ and $v$ are adjacent by Lemma~\ref{lem:bigv}.  Since
the cycle for which $u$ is big has length at least 4, $F_u$ contains
the frame of some vertex on that cycle which is not adjacent to
$u$. Since $H$ has no full star cutset, $H-N[u]$ is connected and by
the Path Corollary, the frames of non-neighbors of $u$ are inside
$F_u$ and by symmetry the frames of non-neighbors of $v$ are inside
$F_v$.  Since $F_u$ and $F_v$ intersect, by property (4) of \rbg s, a
vertex cannot be non-adjacent to both $u$ and $v$. Since $H$ is
triangle-free, it implies that $\{u,v\}$, $N(u) - v$, $N(v) - u$ form
a partition of $V(H)$.  Since $N[v]$ is not a cutset, the subgraph
induced by the neighbors of $u$ distinct from $v$ is connected.  Since
$N[u]$ is not a cutset, the subgraph induced by the neighbors of $v$
distinct from $u$ is connected. Since $H$ is triangle-free, each of
these subgraphs is either empty or a single vertex.  Since $H$ is not
a path, it follows that $H$ is a cycle of length $4$, and thus $H$ is
a chandelier.
\end{proof}

\section{\texorpdfstring{$\ge \! \! 2$}{>=2}-Subdivisions of multigraphs}\label{sec:2sub}

 A vertex $v$ of a graph $G$ such that $G-v$ is a forest is called a {\em
    feedback vertex} of $G$. We make the following remark.

\begin{remark}
  If a full star-cutset in a $\ge \! \! 2$-subdivision of some multigraph $G$
  contains a vertex $v$ of $G$, then $v$ is a cut-vertex of $G$.
\end{remark}

Therefore, a direct
  consequence of Theorem~\ref{th:noscs} is the following:

\begin{corollary}\label{cor:2subdv}
Let $G$ be a 2-connected multigraph with no feedback vertex. Then no
$\ge \! \! 2$-subdivision of $G$ is a \rbg.
\end{corollary}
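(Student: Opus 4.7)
The plan is to verify that every $\ge 2$-subdivision $H$ of $G$ satisfies the hypotheses of Theorem~\ref{th:noscs}, and then to exclude both admissible conclusions of that theorem using the assumptions on $G$.

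First, $H$ is connected (since $G$ is) and triangle-free: every cycle of $H$ arises from a cycle of $G$ (of length at least $2$, as $G$ is a loopless 2-connected multigraph) with each edge replaced by a path of length at least $3$, so $H$ has girth at least $6$. Next, I would show that $H$ has no full star-cutset. Suppose for contradiction that $N_H[u]$ is a full star-cutset. By the Remark preceding the corollary, any vertex of $V(G)$ in $N_H[u]$ would be a cut-vertex of $G$, which is impossible since $G$ is 2-connected. Hence $u$ and both of its neighbors in $H$ are subdivision vertices, so together they form three consecutive interior vertices on the subdivided path of some edge $e = xy$ of $G$. Removing $N_H[u]$ splits this path into a sub-path still attached to $x$ and a sub-path still attached to $y$, while leaving every other subdivided edge intact. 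Since $G$ is 2-connected, $G - e$ is still connected (a 2-connected graph has no bridges), so $x$ and $y$ remain joined in $H - N_H[u]$ through a subdivided path of $G - e$. Thus $H - N_H[u]$ is connected, a contradiction.

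Having established these hypotheses, if $H$ were a \rbg{} then Theorem~\ref{th:noscs} (applied with $H^* = H$) would force $H$ to be either a path on at most $4$ vertices or a luxury chandelier. The first is impossible since $G$ is 2-connected and hence contains a cycle, so $H$ does too. For the second, let $v$ be the pivot of the luxury chandelier, so $H - v$ is a tree. If $v \in V(G)$, then $H - v$ is obtained from the subdivision of $G - v$ by attaching pendant subdivided paths (one per edge of $G$ incident to $v$) at certain vertices of $V(G) \setminus \{v\}$; pruning these pendant paths preserves the tree property, and then suppressing the remaining degree-$2$ subdivision vertices produces $G - v$, which is therefore a forest, making $v$ a feedback vertex of $G$ and contradicting our hypothesis. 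If instead $v$ is a subdivision vertex, then $v$ has degree $2$ in $H$, forcing the underlying tree of the chandelier to have only two leaves and hence be a path; so $H$ is a cycle, which makes $G$ itself a cycle (all vertices have degree $2$), and then every vertex of $G$ is a feedback vertex, again a contradiction.

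The step I expect to require the most care is the star-cutset verification: once the Remark has eliminated $V(G)$-vertices from the cutset, one still has to check that a cutset centered at an interior subdivision vertex with two subdivision neighbors cannot disconnect $H$, an argument that uses both the ``at least twice'' subdivision condition (so that three consecutive interior vertices of a subdivided path are available) and the bridgelessness of $G$.
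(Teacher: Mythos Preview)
Your proof is correct and follows the route the paper has in mind: reduce to Theorem~\ref{th:noscs} via the Remark, then rule out the path and chandelier outcomes using 2-connectedness and the absence of a feedback vertex.

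One small economy the paper's phrasing hints at: instead of checking the no-full-star-cutset condition for an \emph{arbitrary} $\ge 2$-subdivision (which forces you into the extra case where $N_H[u]$ misses $V(G)$ entirely), you can take $H$ to be the \emph{exact} $2$-subdivision of $G$ and let $H^*$ range over all further subdivisions. In the exact $2$-subdivision every closed neighborhood meets $V(G)$, so the Remark alone kills all potential full star-cutsets; Theorem~\ref{th:noscs} then handles every $\ge 2$-subdivision at once. Your version works just as well, it simply does a little more by hand.
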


For example,
no $\ge \! \! 2$-subdivision of one of the graphs in Figure~\ref{fig:minors}
is a \rbg .

\begin{figure}[htbp]
\centering \includegraphics[scale=1]{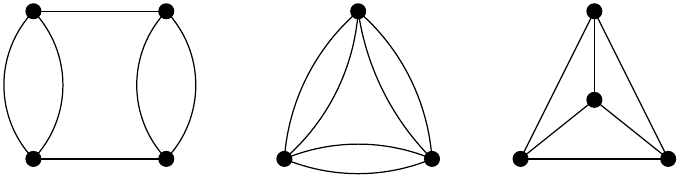}
\caption{Scott's conjecture is false when $H$ is a $\ge \! \!
  2$-subdivision of one of these graphs.} \label{fig:minors}
\end{figure}

We now characterize all \rbg s that are a $\ge \! \! 2$-subdivisions of some multigraphs.

\begin{lemma}\label{lem:glueing}
  Consider a \rbg~ $G_1$ and a chandelier $G_2$, and let $v \in
  G_1$. Then the graph $G$ obtained from the disjoint union of $G_1,
  G_2$ by identifying $v$ with the pivot of the chandelier $G_2$ is a
  \rbg.
\end{lemma}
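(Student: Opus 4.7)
The plan is to start from a representation $\mathcal{F}=\{F_u:u\in G_1\}$ of $G_1$ and to extend it by drawing the tree $T$ of the chandelier $G_2$ inside a tiny region hugging the right side of $F_v$, in such a way that the leaves of $T$ straddle the right side of $F_v$ (so they meet $F_v$ and nothing else of $\mathcal{F}$) while the other vertices of $T$ sit strictly inside $F_v$ and touch nothing of $\mathcal{F}$ at all.

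The first step is to locate a safe neighborhood. The right side of $F_v$ is a vertical segment, and by restrictions~(1)--(3) only finitely many other frames of $\mathcal{F}$ meet it, each at isolated points. Hence there is a point $p=(x_0,y_p)$ on the right side of $F_v$ together with $\varepsilon>0$ such that the open axis-parallel box $B:=(x_0-\varepsilon,x_0+\varepsilon)\times(y_p-\varepsilon,y_p+\varepsilon)$ lies strictly between the two horizontal sides of $F_v$, strictly between the left side of $F_v$ and the line $x=x_0+\varepsilon$, and meets every frame of $\mathcal{F}$ other than $F_v$ in the empty set; its intersection with $F_v$ is exactly the vertical slit $\{x_0\}\times(y_p-\varepsilon,y_p+\varepsilon)$.

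The second step is to draw $T$ inside $B$. Each leaf of $T$ becomes a small axis-parallel rectangle straddling the line $x=x_0$, with both of its horizontal sides sitting inside $(y_p-\varepsilon,y_p+\varepsilon)$; then the right side of $F_v$ crosses both the top and the bottom of that leaf in exactly the pattern of Figure~\ref{fig:inter}, which produces the edge between $v$ and the leaf and creates no other unwanted intersection. Each internal vertex of $T$ is placed as a rectangle lying strictly to the left of $x=x_0$, hence strictly inside $F_v$ and disjoint from every other element of $\mathcal{F}$. To realise the tree-edges I would mimic the nested construction in the proof of Lemma~\ref{lem:trees}: root $T$ at an arbitrary vertex, assign each vertex a $y$-interval that strictly contains the pairwise-disjoint $y$-intervals of its children, and choose the $x$-coordinates so that each parent's right side crosses both the top and the bottom of each of its children. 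Scaling the whole thing to fit into $B$ is routine.

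The main obstacle is actually carrying out that last step. In Lemma~\ref{lem:trees} the leaves are drawn as the rightmost frames and the pivot is added as a separate ``large box'' further to the right, whereas in our setting the ``pivot'' $F_v$ is already fixed and sits to the \emph{left} of where the leaves have to end up; this forces every tree-edge incident to a leaf to use the pattern ``parent's right side crosses child's top and bottom'' (the reverse is impossible because the leaf's right side sticks out past $x_0$ into the exterior of $F_v$, while any parent of that leaf must remain strictly inside $F_v$). With this pattern propagated up the tree and combined with the $y$-nesting above, the four conditions of Definition~\ref{def:rbg} follow easily: (1) and (2) by putting the new vertical and horizontal sides in generic position inside $B$; (3) by construction; and (4) because any frame that could sit inside the intersection of two overlapping new frames would have to be a common descendant in $T$, and descendants are placed so as to stick out to the right of every such intersection rectangle.
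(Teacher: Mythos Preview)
Your approach is essentially the paper's: locate a small safe region along the right side of $F_v$ and drop in the tree representation of Lemma~\ref{lem:trees} with the leaves straddling that right side. There is, however, a genuine gap in your choice of the box $B$.

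You only demand that $B$ meet no other \emph{frame} (boundary curve) of $\mathcal{F}$; you never ensure that $B$ avoids the \emph{bounded region} of a frame $F_u$ with $u$ adjacent to $v$ in $G_1$. Suppose $u$ is such a neighbour and the intersection pattern is the one where $F_v$'s right side crosses both the top and the bottom of $F_u$. Then the portion of $F_v$'s right side lying between those two crossing points sits in the interior of $F_u$. Your point $p$ may land exactly there (avoiding the finitely many ``isolated points'' does not prevent this), and then a sufficiently small $B$ lies entirely inside the region bounded by $F_u$. Every internal tree vertex you draw is strictly to the left of $x_0$ and hence inside the region bounded by $F_v$; being in $B$ it is also inside the region bounded by $F_u$. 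Since $F_v$ and $F_u$ intersect, this violates condition~(4) of Definition~\ref{def:rbg}. Your verification of~(4) treats only pairs of \emph{new} frames and overlooks exactly this case of an old intersecting pair $(F_v,F_u)$ swallowing a new frame.

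The paper sidesteps this by taking its region $R$ around the \emph{top-right corner} of $F_v$ and explicitly requiring that $R$ be neither intersected by nor contained in any $F_u$ with $u$ adjacent to $v$. Near that corner one is automatically above the top side of every neighbour $F_u$ of the first kind and to the right of the right side of every neighbour of the second kind, so such an $R$ exists. Your argument is easily repaired by the same choice (or, equivalently, by picking $y_p$ above the top side of every $F_u$ of the first kind).
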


\begin{proof}
  Let $\F=\{F_u\,|\,u\in G_1\}$ be a representation of $G_1$ as a
  \rbg. By the definition of a \rbg, there exists a small rectangular
  region $R$, whose interior contains the top right corner of $F_v$
  and which does not intersect or is contained in any other $F_u$ intersecting $F_v$ for $u\ne v$.

  Take a representation $\D$ of $G_2$ minus its pivot as a tree with all leaves on
  the right, such as the one depicted in Figure~\ref{fig:btree},
  shrink it and put $\D$ inside $R$, so that all leaves intersect the
  right side of $F_v$.

  The union of $\F$ and the shrunk version of $\D$ is a representation
  of $G$.
\end{proof}

\begin{figure}[htbp]
\centering
\includegraphics[scale=1]{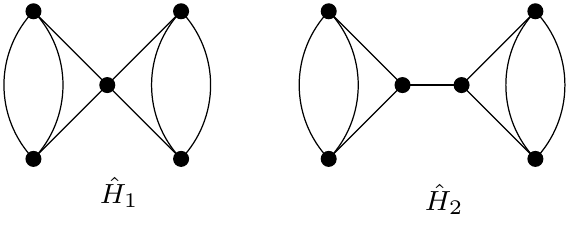}
\caption{Graphs $\hat H_1$ and $\hat H_2$.} \label{fig:bcv}
\end{figure}

\begin{lemma}\label{lem:specialgraphs}
  No $\ge \! \! 2$-subdivision $H^*$ of $\hat H_1$ or $\hat H_2$ (see
  Figure~\ref{fig:bcv}) is a \rbg.
\end{lemma}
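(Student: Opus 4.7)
The plan is to imitate the argument that no $\ge \! \! 2$-subdivision of $K_4$ is a \rbg, but with an extra layer of case analysis to handle the cut-vertex of $\hat H_i$. Suppose for a contradiction that $\F=\{F_w\,|\,w\in H^*\}$ is a representation of such an $H^*$ as a \rbg. Let $v$ be the vertex of $H^*$ corresponding to the cut-vertex of $\hat H_i$, and let $B_1, B_2$ be the two ``sides'' of $\hat H_i$ meeting at the cut-vertex, each of which (by the structure of $\hat H_1$ and $\hat H_2$) contains a cycle. In $H^*$ these give two induced cycles $C_1, C_2$ whose only common vertex is $v$.

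The next step is to apply the Cycle Lemma to $C_1$ and to $C_2$, obtaining big vertices $u_1 \in V(C_1)$ and $u_2 \in V(C_2)$. In the \emph{generic case}, when $u_i \neq v$ for $i=1,2$, the cycles $C_i$ are long (every edge of $\hat H_i$ is subdivided at least twice, so $|C_i|$ is large), and we may argue that $v\in V(C_i)\setminus N[u_i]$, whence $F_v$ is inside $F_{u_i}$. Crucially, removing $N[u_1]$ in $H^*$ does not disconnect $v$ from $V(C_2)\setminus \{v\}$, because $u_1$ and its two neighbors all lie strictly inside $B_1$, leaving $C_2$ intact. The Path Corollary then forces every frame of $V(C_2)\setminus\{v\}$, and in particular $F_{u_2}$, to be inside $F_{u_1}$. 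A symmetric argument gives $F_{u_1}$ inside $F_{u_2}$, which contradicts the fact that nesting is antisymmetric.

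The main obstacle is the \emph{exceptional case} where $v$ is itself a big vertex of one (or both) of $C_1, C_2$. By Lemma~\ref{lem-cycle}, any second big vertex of such a cycle must then be a neighbor of $v$, hence a subdivision vertex $u_i'$ adjacent to $v$, and removing $N[u_i']$ deletes $v$ and thus severs the link between the two blocks. To handle this, the plan is to exploit the specific structure of $\hat H_1$ and $\hat H_2$: these graphs are chosen so that one can produce an additional induced cycle $C_3$ (through the cut-vertex, or entirely avoiding $v$, depending on the block structure of $\hat H_i$) whose big vertex participates in a chain of ``contains'' relations with $u_1, u_2$. Concretely, one selects a vertex $w$ in $(V(C_1)\cup V(C_2))\setminus N[v]$ whose frame, forced to lie inside $F_v$, must also lie outside $F_{u_i'}$ by the maximality of $u_i'$ on its cycle, producing a contradiction with the Path Corollary applied to the component of $H^*-N[u_i']$ containing $w$.

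This case analysis is what distinguishes the proof from the clean symmetric argument used for $K_4$, and verifying it for each of $\hat H_1$ and $\hat H_2$ separately (possibly by a short combinatorial check of which vertices of the block containing $v$ can be big vertices of a long subdivided cycle through $v$) is where the technical work lies.
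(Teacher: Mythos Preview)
Your proposal overcomplicates the argument by routing both cycles through the cut-vertex, and the ``exceptional case'' you isolate is not actually resolved: the sketch (``one selects a vertex $w$\dots'', ``possibly by a short combinatorial check'') does not produce a contradiction. Concretely, if $v$ is a big vertex of $C_1$, then $F_v$ contains the frames of $V(C_1)\setminus N[v]$, but since $v$ is a cut-vertex of $\hat H_i$, the graph $H^*-N[v]$ is disconnected and the Path Corollary gives you no information about frames on the $B_2$ side. Your proposed auxiliary cycle $C_3$ and the vertex $w$ are not specified, and it is not clear what chain of containments you intend to close.

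The paper's proof sidesteps all of this by choosing better cycles. The point is that each of $\hat H_1$ and $\hat H_2$ contains two \emph{vertex-disjoint} digons, neither of which uses the cut-vertex; in $H^*$ these digons become two vertex-disjoint induced cycles $C_1,C_2$. Because no vertex of a digon is a cut-vertex of $\hat H_i$ and no digon edge is a cut-edge, one checks directly that for every $w\in V(C_i)$ the graph $H^*-N[w]$ is connected. In particular, the big vertex $v_i$ of $C_i$ (guaranteed by the Cycle Lemma) is never the center of a full star-cutset, so the Path Corollary applies unconditionally: $F_{v_1}$ lies inside $F_{v_2}$ and $F_{v_2}$ lies inside $F_{v_1}$, a contradiction. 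No case analysis on whether a big vertex coincides with the cut-vertex is needed, because the cut-vertex was excluded from $C_1\cup C_2$ at the outset.
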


\begin{proof}
  Suppose not and let $\F$ be a representation of $H^*$. Both $\hat H_1$
  and $\hat H_2$ have two disjoint digons which correspond to two vertex
  disjoint cycles $C_1,C_2$ in $H^*$. Since no vertex of these digons is
  a cut-vertex (and no edge of the digon is a cut-edge), no vertex in
  $C_i$ is the center of a star cutset in $H^*$. In particular $v_i$,
  the big vertex of $C_i$ in $\F$, is not the center of a star-cutset
  in $H^*$. Thus, by the Path Corollary, $F_{v_1}$ is inside $F_{v_2}$
  and $F_{v_2}$ is inside $F_{v_1}$, a contradiction.
\end{proof}

Given a connected graph $G$, define a bipartite graph $B_G=(U,V,E)$ as
follows. The elements of $U$ correspond to cut-vertices of $G$, while
the elements of $V$ correspond to maximal 2-connected components (also
called \emph{blocks\footnote{Note that a block might
consist only of two vertices of $G$ joined by an edge.}} in the
remainder) of $G$.  There is an edge in $B_G$
between an element of $U$ and an element of $V$ if the corresponding
cut-vertex belongs to the corresponding block. It is well known that
$B_G$ is a tree, called the \emph{block decomposition}, or the
\emph{block tree} of $G$, and that all leaves of the tree are in
$V$. If one block of $G$ is set as the root of the decomposition, we
obtain a \emph{rooted block decomposition} of $G$. In a rooted block
decomposition of $G$, the \emph{parent cut-vertex} of a block of $G$
distinct from the root is defined naturally.

\smallskip

Corollary~\ref{cor:2subdv} and Lemmas~\ref{lem:glueing}
and~\ref{lem:specialgraphs} have the following consequence.

\begin{theorem}\label{thm:alg}
For any connected multigraph $G$, either all $\ge \! \! 2$-subdivisions of $G$ are \rbg
s, or none of them are. Moreover, given a connected multigraph $G$, it can be
determined in linear time whether $G$ satisfies the former or the
latter property.
\end{theorem}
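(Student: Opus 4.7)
The plan is to establish a combinatorial characterization, depending only on $G$ and not on the particular subdivision, of when a $\ge \! \! 2$-subdivision of $G$ is a \rbg; the dichotomy then follows automatically. Concretely, I would show that every $\ge \! \! 2$-subdivision of $G$ is a \rbg\ if and only if every 2-connected block $B$ of $G$ has a feedback vertex, and the block tree $B_G$ admits a rooting at some block $B_0$ such that, for each non-root block $B$, the cut-vertex connecting $B$ to its parent block lies in the set $F_B$ of feedback vertices of $B$ (with the convention that both endpoints of a single-edge block are feedback vertices of it).

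The key structural fact underlying this characterization is that, for a 2-connected block $B$ with a feedback vertex $v$, every $\ge \! \! 2$-subdivision $B^*$ of $B$ is a luxury chandelier with pivot $v$. Indeed, since $B$ is 2-connected it has minimum degree at least $2$, so the tree $T = B - v$ has all its leaves adjacent to $v$ in $B$; after subdividing each edge of $B$ at least twice, the edges incident to $v$ become pendant paths of length at least two attached to $T^*$ at its leaves, and one checks that the leaves of $B^* - v$ are precisely the neighbours of $v$ in $B^*$, each having a degree-two neighbour in $B^* - v$. Granted this, sufficiency of the combinatorial characterization follows: root $B_G$ at the guaranteed $B_0$ and build a representation of $G^*$ incrementally in BFS order, starting from the root block $B_0^*$ (a chandelier, represented using Lemma~\ref{lem:trees}) and gluing each subsequent block $B$ at its parent cut-vertex $c \in F_B$ (which serves as the pivot of the chandelier $B^*$) via Lemma~\ref{lem:glueing}. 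Single-edge blocks give pendant paths after subdivision, which are absorbed by a straightforward adaptation of Lemma~\ref{lem:glueing}.

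For necessity, assume that some $\ge \! \! 2$-subdivision $G^*$ of $G$ is a \rbg. The feedback-vertex condition on each 2-connected block follows from Corollary~\ref{cor:2subdv} together with the closure of \rbgs under induced subgraphs. For the block-tree rooting condition, I argue by contradiction: if no valid rooting exists, then a simple analysis of the ``forced'' edge orientations on $B_G$ produces either a block-tree edge $(B, B', c)$ with $c \notin F_B \cup F_{B'}$, or two blocks $B_1, B_2$ (connected through $B_G$) together with cut-vertices $c_i$ such that a cycle avoiding $c_i$ exists in each $B_i$. Choosing those cycles carefully to avoid all other cut-vertices of $G$, and combining them with the subdivided block-tree path between $B_1$ and $B_2$, one extracts an induced subgraph of $G^*$ isomorphic to a $\ge \! \! 2$-subdivision of $\hat H_1$ or $\hat H_2$, contradicting Lemma~\ref{lem:specialgraphs}.

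The hard part will be this necessity argument: one must choose the cycles so that no vertex of either is a cut-vertex of $G^*$ (equivalently, the center of a star-cutset in the extracted subgraph), and verify that the resulting subgraph is truly induced in $G^*$. Once this is done, the algorithmic part is straightforward. After computing the block decomposition in linear time, the set $F_B$ for each 2-connected block $B$ reduces to the vertices of $B$ of degree exactly $|E(B)| - |V(B)| + 2$ (since 2-connectivity makes $B - v$ connected, so it is a tree precisely when its edge count matches), computable in total linear time. The rooting condition is then checked by a single linear-time pass over $B_G$, detecting locally whether any block has two ``forced outgoing'' edges (which makes rooting impossible) and whether all forced orientations point to a common root block.
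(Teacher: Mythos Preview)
Your proposal is correct and follows essentially the same route as the paper. The paper phrases the combinatorial condition as a leaf-peeling algorithm on the block tree (remove a leaf block whenever its parent cut-vertex is a feedback vertex of it; answer \textbf{yes} iff a single block with a feedback vertex remains), which is equivalent to your rooting condition. Sufficiency is proved exactly as you sketch, via Lemma~\ref{lem:glueing} and the observation that a $\ge\!2$-subdivision of a 2-connected block with a feedback vertex is a chandelier. For necessity, the paper's peeling formulation is slightly cleaner than your forced-orientation analysis: when peeling halts with more than one block, the surviving leaf blocks $G_1,G_2$ each contain a single cut-vertex $v_i$ of $G$ (their parent), automatically ensuring the cycles $C_i\subseteq G_i-v_i$ avoid all other cut-vertices; the paper then chooses the two attaching paths to $v_i$ so as to minimise their maximum length, which is what guarantees the extracted subgraph is an induced subdivision of $\hat H_1$ or $\hat H_2$. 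Your degree formula $d(v)=|E(B)|-|V(B)|+2$ for feedback vertices of a 2-connected block is a nice alternative to the paper's cycle-plus-ear test and is arguably simpler.
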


\begin{proof}

Our algorithm proceeds as follows.

\begin{enumerate}
\item
  Build the block tree of $G$. Remove leaves as long as their parent
  cut-vertex is a feedback vertex of the leaf.
\item
 If more than one block is left: answer {\bf no}.
\item
  If one block is left: decide if it has a feedback vertex and answer
  {\bf yes} if there is one and {\bf no} otherwise. To do this, find
  any cycle (greedily using DFS), greedily find an ear of the cycle.
  Then check if any of the two vertices at the end of the ear are
  feedback vertices.
\end{enumerate}

We now prove the correctness of the algorithm. It is enough to prove
that if the algorithm answers {\bf yes} for some input graph $G$, then
any $\ge \! \! 2$-subdivision of $G$ is a \rbg, while if the algorithm
answers {\bf no}, then no $\ge \! \! 2$-subdivision of $G$ is  a
\rbg.
 
Assume first that the algorithm answers {\bf yes}. Observe that any
$\ge \! \! 2$-subdivision of a 2-connected multigraph with a feedback
vertex $v$ is a chandelier with pivot $v$, or a path (if the
multigraph is a $K_2$). It follows from the algorithm that any $\ge \! \!
2$-subdivision of $G$ can be obtained from a chandelier or a path by
repeatedly applying the operation from Lemma~\ref{lem:glueing} or
adding a pendant vertex. This lemma implies that any $\ge \! \!
2$-subdivision of $G$ is a \rbg.

Assume now that the algorithm answers {\bf no}. So the algorithm
either stopped at step (2) or (3). If the algorithm stopped at step
(3), then by Corollary~\ref{cor:2subdv} no $\ge \! \! 2$-subdivision of
$G$ is a \rbg. So suppose the algorithm stopped at step (2). It
follows from step (1) that in this case, the decomposition contains at
least two leaves $G_1$ and $G_2$. Let $v_1$ and $v_2$ be the parent
cut-vertices of $G_1$ and $G_2$. Since $v_i$ is not a feedback vertex
of $G_i$, $G_i-v_i$ contains an (induced) cycle $C_i$. Moreover, since
$G_i$ is 2-connected (and distinct from a single edge, since otherwise
$v_i$ would be a feedback vertex of $G_i$), for any two
vertices on $C_i$ there are internally vertex disjoint paths
connecting them to $v_i$. Choose $C_i$ and these two vertices on $C_i$
in such a way that the maximum of the lengths of the two paths is
minimized. Then add a shortest path between $v_1$ and $v_2$ in $G$ to
$C_1$, $C_2$, and the four paths. It can be checked the subgraph
induced by the vertices of these paths and cycles is isomorphic to a
subdivision of either $\hat H_1$ or $\hat H_2$. It follows from
Lemma~\ref{lem:specialgraphs} that no $\ge \! \! 2$-subdivision of $G$ is
a \rbg.
\end{proof}

\section{Conclusion}

It was already known that any $\ge \! \! 1$-subdivision of a non-planar graph
is a counterexample to Scott's conjecture. This recent obervation was based
on the fact that a particular class of triangle-free graphs of
unbounded chromatic number can be represented as intersection graphs
of line segments in the plane. In this paper, we used the fact that
this particular class of graphs can be represented by intersection graphs of even
more specific objects in the plane, in order to provide a larger class of counterexamples. In
particular, we proved that any $\ge \! \!  2$-subdivision of a
2-connected multigraph is a counterexample to Scott's conjecture.

This was done without studying the construction itself, only the class
of intersection graphs containing it. A natural question is
whether studying the construction itself would provide a larger
class of couterexamples. We can show the answer is negative
when we restrict ourselves to $\ge \! \!  2$-subdivisions of
multigraphs. More details about this, as well as a description of the original construction, are given in Appendix~\ref{sec:cons}.

\medskip

For a given graph $H$, let $\mbox{Forb}^*(H)$ denote the class of graphs excluding all subdivisions of $H$
  as induced subgraphs. Many special cases of the following natural refinement of Scott's conjecture remains. 

\begin{question}\label{question:refinedscott}
  For which graphs $H$ is $\mbox{Forb}^*(H)$ $\chi$-bounded?
\end{question}

In view of Theorem~\ref{th:noscs} and
Corollary~\ref{cor:noscs}, one such refinement seems natural.

\begin{question}
  Is it true that for any luxury chandelier $G$, $\mbox{Forb}^*(G)$ is $\chi$-bounded?
\end{question}

\smallskip

Note that cycles of length at least 5 are luxury chandeliers, so a
positive answer to this question would imply that the class of graphs
with no induced cycles on at least 5 vertices is $\chi$-bounded, which is a
long-standing conjecture of Gy\'arf\'as~\cite{Gya87}.

\smallskip

Another special case prompted by Corollary \ref{cor:2subdv} is the following.

\begin{question}
  For which graphs $G$ do we have that for all subdivisions $H$ of $G$, $\mbox{Forb}^*(H)$ is $\chi$-bounded?
\end{question}

This we hope may have a fairly clean answer (as opposed to Question \ref{question:refinedscott}). It does not however escape the difficulty of the long-standing conjecture of Gy\'arf\'as~\cite{Gya87} as we can pick $H$ to be a long cycle.

\subsection*{Recent development}

After the submission of this article, there have been many exciting development.
Scott and Seymour \cite{CSSI} have answered (in the positive) the long standing question of Gy\'arf\'as~\cite{Gya87} stated as open here. Chudnovsky, Scott and Seymour~\cite{CSSV} have also proved a partial converse of the result in this article.

\section*{Acknowledgements} The work at the origin of this paper
was started during a workshop on $\chi$-bounded classes organized in
Lyon, France, in March 2012. The authors would like to thank the
organizers and participants of this meeting, in particular Feri
Kardo\v s, Fr\'ed\'eric Maffray, and St\'ephan Thomass\'e, for the
discussions initiating this work. We would also like to thank the anonymous referees for their many helpful suggestions.

\bibliographystyle{plain}



\clearpage

\appendix

\section{Subdivisions of \texorpdfstring{$K_4$}{K4}}\label{sec:k4}

It was proved by Scott
(see~\cite{LMT12}) that there exists a constant $c$ such that graphs
with no induced subdivisions of $K_4$ have chromatic number at most
$c$. It remains interesting to understand which subdivisions of $K_4$
are responsible for this bound on the chromatic number. 

The \emph{type} of a subdivision of $K_4$ is the number of
subdivided edges in the original copy of $K_4$. For instance, a
type 6 subdivision of $K_4$ is obtained from $K_4$ by subdividing each
edge at least once, while a type 0 subdivision of $K_4$ is just a copy
of $K_4$.

Corollary~\ref{cor:noscs} directly implies that any type 6 or type 5
subdivision of $K_4$, and any type 4 subdivision of $K_4$ in which the
non-subdivided edges of $K_4$ do not share a vertex are counterexamples to
Scott's conjecture. In this section, we show that every other
triangle-free subdivision of $K_4$ can be represented as a \rbg.

\smallskip

In Figure~\ref{fig:pathinsertion}, we show that starting from a frame
representation of a graph $G$, and given a specific edge $uv$ of $G$, we can
inductively construct frame representations of graphs obtained from
 $G$ by subdividing the edge $uv$. These operations are only valid if
the intersection of the frames of $u$ and $v$ in the original
representation of $G$ is of a certain type. We omit the details, since
it can be easily checked that these operations work fine in the
representations of the graphs of Figure~\ref{fig:repk4}.

\begin{figure}[htbp]
\centering \includegraphics[scale=0.55]{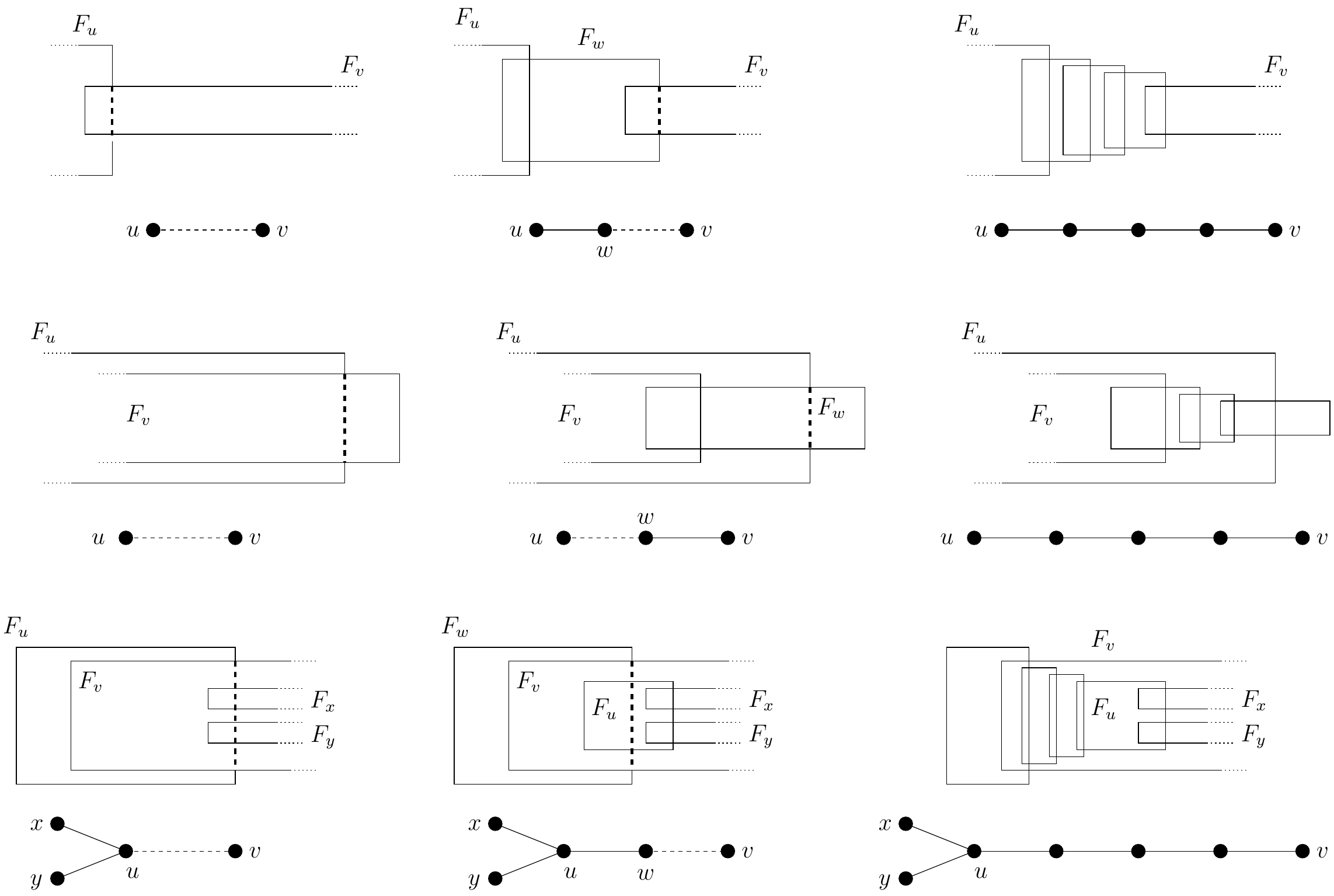}
\caption{A vertical dashed line corresponds to an intersection we
  replace by (a path of) frames, so that given a frame representation
  of the original graph, we can deduce a frame representation of any
  graph obtained by subdividing the edge
  $uv$.} \label{fig:pathinsertion}
\end{figure}

We can now state our result about triangle-free subdivisions of $K_4$.

\begin{theorem}
  Let $G$ be a triangle-free subdivision of $K_4$. Then $G$ is a \rbg s if and only if
  \begin{enumerate}
  \item
    $G$ is $K_4$ with at most 3 of the 6 edges subdivided, or
  \item
    $G$ is $K_4$ with 4 subdivided edges and the two non-subdivided
    edges share a vertex.
  \end{enumerate}
\end{theorem}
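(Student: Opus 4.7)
My plan is to prove the characterization by handling the two implications separately, reducing each to results already established in the paper.

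\textbf{Forward direction (``$\Rightarrow$'').} I will show that a triangle-free subdivision $G$ of $K_4$ which falls outside conditions (1) and (2)---namely, $G$ is of type $6$, of type $5$, or of type $4$ with the two non-subdivided edges forming a matching---cannot be a \rbg. I plan to apply Corollary~\ref{cor:noscs} directly: each such $G$ is connected and triangle-free (by hypothesis), and is plainly neither a path on at most four vertices nor a chandelier, since $G$ contains three internally disjoint subdivided cycles inherited from $K_4$. What remains is to verify that $G$ has no full star-cutset, which is a short case analysis on the deleted vertex $v$. If $v$ is an internal subdivision vertex, then $|N[v]|=3$ and removing $N[v]$ splits only one of the six subdivided paths between original vertices, leaving the other five intact. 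If $v$ is one of the four original degree-$3$ vertices, then after removing $N[v]$ the three remaining original vertices of $K_4$ are still pairwise joined by the (possibly subdivided) $K_4$-edges between them, so the graph stays connected. Hence Corollary~\ref{cor:noscs} yields that no such $G$ is a \rbg.

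\textbf{Backward direction (``$\Leftarrow$'').} For each $G$ covered by (1) or (2), I will exhibit an explicit restricted frame representation. By the action of the automorphism group of $K_4$ on edge subsets, there are only four minimal configurations to realize: (a)~type $2$ with the two subdivided edges forming a matching, (b)~type $3$ with the subdivided edges forming a triangle, (c)~type $3$ with the subdivided edges forming a Hamiltonian path, and (d)~type $4$ with the two non-subdivided edges sharing a vertex. For each case I will draw a base representation in which every subdivided edge is subdivided exactly once; these are the pictures collected in Figure~\ref{fig:repk4}. Then, to produce representations of arbitrary subdivisions admitted by (1) or (2), I will apply the path-insertion operation of Figure~\ref{fig:pathinsertion} at each designated subdivided edge: the operation replaces one intersection by an arbitrarily long induced path while preserving the four conditions of Definition~\ref{def:rbg}, provided that the intersection being replaced is of the admissible type. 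Iterating this independently at every designated edge yields a \rbg\ representation of $G$.

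\textbf{Main obstacle.} The forward direction is essentially bookkeeping once Corollary~\ref{cor:noscs} is invoked. The real work lies in drawing the four base representations: each intersection corresponding to a designated subdivided edge must lie in the admissible configuration of Figure~\ref{fig:pathinsertion}, and these local constraints must be compatible with the global nesting forced by the Cycle Lemma on every induced cycle of $G$. Cases (a) and (b) can be obtained from Lemma~\ref{lem:trees} by wrapping the tree representation inside a single outer frame, but cases (c) and especially (d) require a more delicate placement to simultaneously route all four subdivided paths without violating property (4) of Definition~\ref{def:rbg}. The reason the matching sub-case of type $4$ cannot be drawn (and is thus excluded from (2)) is visible from the forward argument: it would require two disjoint cycles whose big vertices are non-adjacent, contradicting Lemma~\ref{lem:bigv}.
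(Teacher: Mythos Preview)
Your overall approach matches the paper's: the forward direction via Corollary~\ref{cor:noscs} (after checking the absence of full star-cutsets) and the backward direction via explicit base representations extended by the path-insertion gadget of Figure~\ref{fig:pathinsertion}. The paper draws only two base pictures in Figure~\ref{fig:repk4} rather than your four, but that is an organizational difference, not a mathematical one.

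Two points in your commentary are inaccurate, however. First, cases (a) and (b) cannot be obtained from Lemma~\ref{lem:trees} ``by wrapping the tree representation inside a single outer frame'': neither the type-$2$ matching subdivision nor the type-$3$ triangle subdivision of $K_4$ is a chandelier, since deleting any single vertex leaves a graph that still contains a cycle. These cases need genuine ad~hoc drawings just like (c) and (d); the paper's two pictures are arranged so that all of (a)--(d) can be read off from them by choosing which of the designated edges to subdivide. Second, your explanation of why the type-$4$ matching case is excluded is wrong: a subdivision of $K_4$ never contains two vertex-disjoint cycles (every cycle passes through at least three of the four branch vertices), so Lemma~\ref{lem:bigv} is not violated in the way you describe. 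The obstruction really comes from Theorem~\ref{th:noscs} as stated, not from a disjoint-cycles argument.

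Finally, your star-cutset case analysis is slightly loose: when $v$ is a subdivision vertex adjacent to an original $K_4$-vertex $u$, removing $N[v]$ deletes $u$ and hence disturbs three of the six paths, not one. The conclusion that $G-N[v]$ stays connected is still correct, but the justification needs the observation that the remaining original vertices are still pairwise linked.
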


\begin{proof}
From Corollary~\ref{cor:noscs}, we know that any type 6 or type 5
subdivision of $K_4$, and any type 4 subdivision of $K_4$ in which the
non-subdivided edges of $K_4$ do not share a vertex cannot be a \rbg.

\begin{figure}[htbp]
\centering
\includegraphics[scale=0.6]{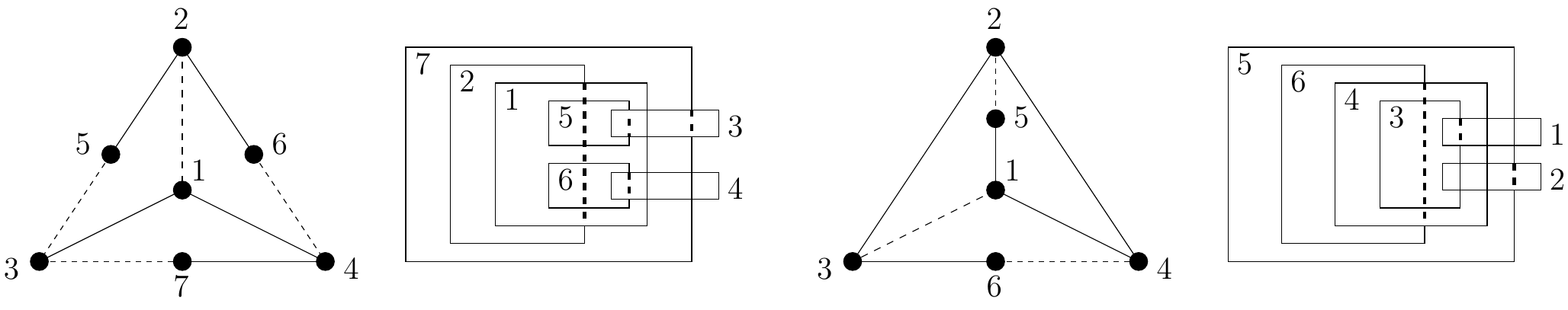}
\caption{Frame representations of triangle-free subdivisions of $K_4$
  that are \rbg s.} \label{fig:repk4}
\end{figure}

For the other triangle-free subdivisions of $K_4$, the construction is
given in Figure~\ref{fig:repk4}.  The convention on the figure is the
following: vertical dashed lines correspond to intersections we replace by
(a path of) frames according to Figure~\ref{fig:pathinsertion}.

For the graph on the left of Figure~\ref{fig:repk4}, we can subdivide
the edges $35$ and $46$ as depicted in Figure~\ref{fig:pathinsertion}
(top), the edge $37$ as depicted in Figure~\ref{fig:pathinsertion}
(middle), and the edge $12$ as depicted in
Figure~\ref{fig:pathinsertion} (bottom).

For the graph on the right of Figure~\ref{fig:repk4}, we can subdivide
the edge $13$ as depicted in Figure~\ref{fig:pathinsertion}
(top), the edge $25$ as depicted in Figure~\ref{fig:pathinsertion}
(middle), and the edge $46$ as depicted in
Figure~\ref{fig:pathinsertion} (bottom).

It can be checked that for these two graphs, the introduction of (paths
of) frames as depicted in Figure~\ref{fig:pathinsertion} yields
representations satisfying (1)--(4) in Definition~\ref{def:rbg},
therefore all the subdivisions of $K_4$ considered here are \rbg s.
\end{proof}

\section{The construction}\label{sec:cons}

Our ultimate goal is to characterize (multi)graphs $G$ such that all $\ge \! \! 2$ subdivisions
of $G$ appear as an induced subgraph in the construction
of~\cite{PKK13}. But in the previous sections, we instead
characterized (multi)graphs $G$ where all $\ge \! \! 2$ subdivisions of $G$ are \rbg s (Theorem \ref{thm:alg}), which
at first seems more restrictive. In this section, we bridge this gap
by showing that the two classes are in fact the same.

We first show that any graph appearing as an induced subgraph in the
construction of~\cite{PKK13} can be obtained by repeatedly applying
two fairly simple operations, \add\ and \join. We then deduce that \rbg s that are $\ge \! \!
2$-subdivisions of some multigraph appear as an induced subgraph in
the construction (Theorem~\ref{thm:construction}).

\begin{definition}
  A \emph{\gssp} $(G,\S)$ is a graph $G$ together with a set $\S$ of
  stable sets of $G$.


  A \gssp\ $(G,\S)$ is an \emph{induced sub\gssp} of $(H,\S')$ if $G$
  is an induced subgraph of $H$ and $\S$ is a subset of the
  restriction of $\S'$ to the vertices of $G$.
\end{definition}

We define an iterative process which yields exactly the graphs in
Burling and Pawlik et al.'s construction.

\begin{definition}
  We define a procedure $\next$ which takes as input a \gssp\ $(G,\S)$
  and returns a \gssp\ $(G',\S')$. $(G',\S')$ is obtained from $(G,\S)$ by
  \begin{enumerate}
  \item
    adding $|\S|$ disjoint
    copies $(H_S,\S(H_S))$ of $(G,\S)$, indexed by stable sets $S\in
    \S$,
  \item adding a vertex $v_{S,T}$ whose neighborhood is exactly $T$ for each $S\in \S$ and for each $T \in \S(H_S)$, and
  \item setting $\S'$ as the union of $\{S\cup T \,|\, S\in \S,\, T
   \in \S(H_S) \}$ and $\{S\cup \{v_{S,T}\} \,|\, S\in \S,\, T \in
   \S(H_S) \}$.
  \end{enumerate}
\end{definition}

\begin{figure}[htbp]
\centering \includegraphics[scale=0.7]{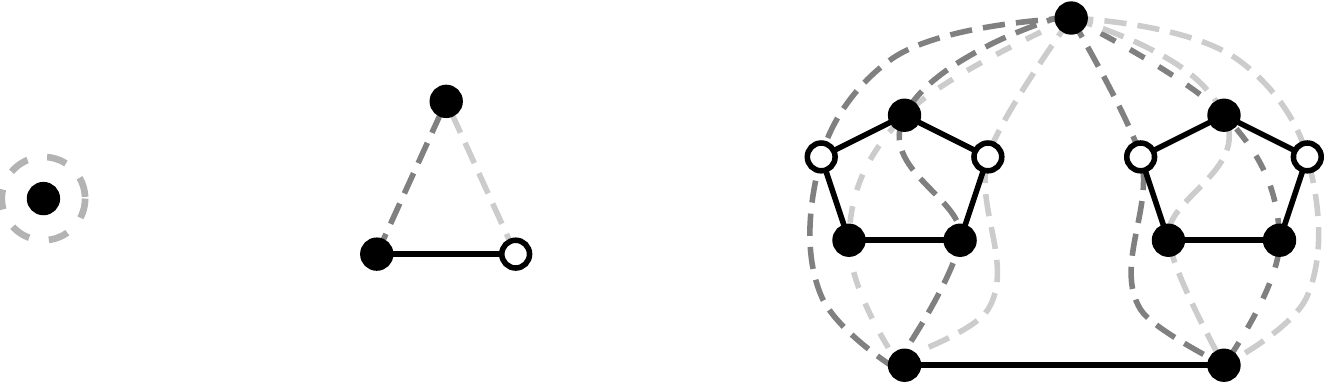}
\caption{The \gssp s from the first 3 steps of the construction. Vertices $v_{S,T}$ are
  white and each stable set in $S \in \S$ is a dashed line through the elements of $S$.} \label{fig:constr}
\end{figure}

\begin{definition}
  We say that a \gssp\ $(G,\S)$ is \emph{constructible} if it is an induced
  sub\gssp\ of $\next^i(G_0,\S_0)$ for some $i$, where $G_0 = K_1$,
  $\S_0 = \{V(G_0)\}$.
\end{definition}

We drew the first few \gssp s $\next^j(G_0,\S_0)$ in
Figure~\ref{fig:constr} and it is not too difficult to check that the
graph of $\next^j(G_0,\S_0)$ is the $j$th graph of Burling and Pawlik
et al.'s construction for each $j$. We can now state this section's
main result.

\begin{theorem}\label{thm:construction}
  For every \rbg\ $H$ that is also a $\ge \! \! 2$-subdivision of some
  multigraph, there is a subset $\S$ of stable sets of $H$ for which $(H,\S)$ is
  constructible.
\end{theorem}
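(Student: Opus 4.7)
The plan is to combine the structural characterization of Theorem~\ref{thm:alg} with a direct induction on the building procedure. Recall from the correctness proof of Theorem~\ref{thm:alg} that if $H$ is a $\ge \! \! 2$-subdivision of a multigraph $G$ on which the algorithm answers \textbf{yes}, then $H$ is obtainable from a single chandelier (or a path, which is itself a $\ge \! \! 2$-subdivision of a $K_2$) by repeatedly applying the gluing operation of Lemma~\ref{lem:glueing}, namely attaching a new chandelier by identifying its pivot with an existing vertex. I would prove the theorem by induction on the number of gluing steps, strengthened so that the \gssp\ $(H,\S)$ produced is \emph{covering}, meaning that every vertex of $H$ belongs to some $S\in\S$.

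For the base case, I need to show that every chandelier (and, trivially, every path) admits a covering constructible \gssp. The idea is to build the chandelier bottom-up inside the construction. A single application of \next\ followed by restriction to one copy $H_S$ together with selected new pivot vertices $v_{S,T}$ has the effect of attaching new pendants, each adjacent to a chosen stable set of the previous \gssp, since each such $v_{S,T}$ is adjacent to exactly $T$ inside $H_S$. Iterating this operation, I can build the tree $T=C-p$ of the chandelier $C$ as an induced sub\gssp\ of a suitable iterate $\next^k(K_1,\{V\})$, tuning the choice of stable sets retained at each step so that the set of leaves of $T$ appears as a stable set in the accompanying $\S$. One final \next-step then produces the central pivot $p$ of $C$ as the new vertex $v_{S,L}$, where $L$ is the leaf stable-set; the resulting restricted stable-set family remains covering.

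For the inductive step, suppose $H=H'\cup_v C$, where $(H',\S')$ is a covering constructible \gssp\ by induction and $C$ is a chandelier glued at $v\in H'$. Choose $S\in\S'$ with $v\in S$ and apply \next\ to $(H',\S')$: this produces a copy $H'_S$ of $H'$ in which a corresponding vertex $v'$ of $v$ appears, and stable sets of the form $S\cup T$ link the original vertex $v$ with vertices of $H'_S$. By subsequent applications of \next\ performed in the same spirit as the base case, but now with $v'$ playing the role of the pivot of $C$, one constructs $C$ attached at $v'$: the leaves of $C$'s tree are added as new pivot vertices whose stable-set neighborhoods in the successive copies are arranged to be compatible with this identification. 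Taking the induced sub\gssp\ that keeps the relevant copy of $H'$ together with the added chandelier yields $H$ up to the renaming $v'\mapsto v$, and a careful choice of kept stable sets preserves the covering property.

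The main obstacle is the careful bookkeeping of stable sets across successive \next-applications. Since an induced sub\gssp\ may only restrict existing stable sets, which always take the form $S\cup T$ or $S\cup\{v_{S,T}\}$, one must verify after each step that the covering property survives and that enough stable sets remain to drive the next \next-operation. In the inductive step, the most delicate point is ensuring that the chandelier's pivot can be identified with the preexisting vertex $v$ rather than being a brand-new \next-vertex; this is managed by arranging the final \next-step so that the leaves of $C$'s tree are exactly the vertices just created, with their neighborhoods chosen so that $v'$ (standing for $v$) is adjacent to precisely these leaves and to nothing else from the new structure. Working out this bookkeeping rigorously is the principal technical content of the proof.
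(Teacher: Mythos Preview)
Your overall plan---induct along the gluing steps of Theorem~\ref{thm:alg} and handle one chandelier at a time---is the same high-level decomposition the paper uses. The difficulty is in your base case (and, symmetrically, in the inductive step): you assert that one can build the tree $T=C-p$ as an induced sub\gssp\ in which the \emph{entire leaf set} $L$ of $T$ occurs as a single stable set, so that the pivot $p$ can then be created by one \add\ on $L$. This claim is doing all the work and is left unproved. With the two primitive moves the paper isolates, it is not clear how to achieve it: \add\ only introduces singleton stable sets, and \join\ merges a fixed $S\in\S_2$ into every $S_1\in\S_1$, so merging the leaf sets of several branches into one stable set simultaneously destroys the singleton stable sets $\{c_i\}$ you would need to later attach a common parent to those branches. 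For instance, for the ``Y'' tree (a centre with three length-2 arms) I do not see how to obtain both $\{a_1,a_2,a_3\}$ (needed to create the centre) and $\{b_1,b_2,b_3\}$ (the leaf set) in the same constructible \gssp\ with these moves. Going back to the raw \next\ operation does not obviously help either: its output stable sets are of the form $S\cup T$ or $S\cup\{v_{S,T}\}$, so newly created vertices enter stable sets one at a time, not as a whole leaf set. Your inductive step inherits the same problem when you try to realise the pivot as the preexisting vertex $v$.

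The paper sidesteps exactly this obstruction by a different inductive invariant (Lemma~\ref{lem:inter}): instead of deleting only the pivot $r$ and then needing the full leaf set as a stable set, it deletes the whole closed neighbourhood $N[r]$ and proves that $(H-N[r],\S)$ is constructible with every \emph{singleton} $\{u\}$ at distance $2$ from $r$ in $\S$. Since $r$ is a feedback vertex of the root block, $H-N[r]$ restricted to that block is a tree, and Lemma~\ref{lem:tree} supplies all singletons there; the other blocks are handled by induction via their parent cut-vertices. One then \join s the single vertex $r$ to this \gssp, which produces the size-two stable sets $\{r,u\}$, and finally adds the neighbours of $r$ \emph{one at a time} with \add\ on these pairs. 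Thus the paper never needs more than size-two stable sets at the gluing step, which is precisely what \join\ delivers; your approach would need size-$|L|$ stable sets, and that is the missing idea.
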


As a consequence, any \rbg\ $H$ that is also a $\ge \! \! 2$-subdivision
(of some multigraph) appears as an induced subgraph of the
construction. 

To prove Theorem~\ref{thm:construction}, we use two simple operations
that preserve constructability rather than $\next$.

\begin{definition}
  By {\em adding a vertex $v$ on $S$} to a \gssp\ $(G,\S)$ with $S \in
  \S$, we mean to build a new \gssp\ $(H,\S')$ where $H$ is $G$ with a
  new vertex labelled $v$ whose neighborhood is $S$ and $\S'=\S \cup \{\{v\}\}$.

  We define the function $\add$ as $\add((G,\S), S) = (H,\S')$.

  By {\em joining} a \gssp\ $(G_1,\S_1)$ to a \gssp\ $(G_2,\S_2)$ on
  $S \in \S_2$, we mean to build a \gssp\ $(H,\S')$ where $H$ is the
  disjoint union of $G_1$ and $G_2$ and $\S' = (\S_2-\{S\}) \cup \{S \cup
  S_1\,|\, S_1 \in \S_1\}$.

  We define the function $\join$ as $\join((G_1,\S_1), (G_2,\S_2), S) = (H,\S')$.
\end{definition}

In other words, the operation $\add$ adds a vertex to the graph
adjacent to all vertices of a specific stable set and then adds a
stable set containing only this new vertex. The second operation is
the disjoint union of two graphs and the elements of a specific stable
set in the second \gssp\ are added to all stable sets of the first
\gssp.

To simplify the discussion, we allow joining on the empty stable set
(which results in the disjoint union of the graph and the disjoint
union of the stable sets). This can be simulated by adding a vertex $v$ using
$\add$, joining on $\{v\}$, and then removing $v$ (by taking an induced sub\gssp).

The following observations tell us that applying these two operations to
constructible \gssp s yields a constructible \gssp.

\begin{observation}
  $\add((G,\S), S)$ is an induced sub\gssp\ of $\next(G,\S)$ for any $S \in \S$.
\end{observation}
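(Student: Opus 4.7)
The plan is to exhibit $\add((G,\S), S)$ as an induced sub\gssp\ of $\next(G,\S)$ by identifying a canonical copy of it inside the disjoint copy $H_S$ of $G$ together with one chosen vertex of the form $v_{S, \cdot}$. Write $\add((G,\S), S) = (\hat G, \hat \S)$, with $V(\hat G) = V(G) \cup \{v\}$, the new vertex $v$ adjacent in $\hat G$ exactly to $S$, and $\hat \S = \S \cup \{\{v\}\}$. Writing $\next(G,\S) = (G', \S')$, the fact that $S \in \S$ means that a disjoint copy $(H_S, \S(H_S))$ of $(G,\S)$ appears in $G'$; let $\tilde S$ denote the image of $S$ in $V(H_S)$, so $\tilde S \in \S(H_S)$, and the vertex $v_{S, \tilde S}$ is then a vertex of $G'$ whose neighborhood is exactly $\tilde S \subseteq V(H_S)$.

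The embedding I propose sends $V(G)$ onto $V(H_S)$ via the copy isomorphism and $v$ onto $v_{S, \tilde S}$. That this gives an induced copy of $\hat G$ is immediate from the construction of $\next$: $H_S$ already induces a copy of $G$ in $G'$, the vertex $v_{S, \tilde S}$ is adjacent inside $V(H_S) \cup \{v_{S, \tilde S}\}$ to exactly $\tilde S$ (the image of $S$), and no other vertex of the form $v_{S'', T}$ lies in the image of the embedding.

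For the stable-set condition, I need every element of $\hat \S$ to arise as the restriction of some element of $\S'$ to $V(H_S) \cup \{v_{S, \tilde S}\}$. Given $T \in \S$, let $\tilde T \in \S(H_S)$ be its image; then $S \cup \tilde T \in \S'$, and its restriction to $V(H_S) \cup \{v_{S, \tilde S}\}$ equals $\tilde T$, because $S \subseteq V(G)$ is disjoint from both $V(H_S)$ and the singleton $\{v_{S, \tilde S}\}$. Under the isomorphism, $\tilde T$ corresponds to $T \in \S$. For the singleton $\{v\} \in \hat \S$, the stable set $S \cup \{v_{S, \tilde S}\} \in \S'$ restricts to $\{v_{S, \tilde S}\}$ for the same disjointness reason, and under the embedding this corresponds to $\{v\}$. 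Every element of $\hat \S$ is therefore realized as a restriction of some element of $\S'$, as required.

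I do not anticipate a real obstacle: the proof amounts to unwinding the definitions of $\add$, $\next$, and induced sub\gssp, exploiting the crucial fact that in the construction of $\next$ the copies $(H_S, \S(H_S))$ are chosen disjoint from $(G, \S)$, so that the restrictions of $S \cup \tilde T$ and $S \cup \{v_{S, \tilde S}\}$ to $V(H_S) \cup \{v_{S, \tilde S}\}$ pick out exactly the pieces living in the copy and the chosen new vertex, respectively.
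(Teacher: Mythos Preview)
Your proof is correct and follows the same approach as the paper: embed $\hat G$ into a single copy $H_{(\cdot)}$ together with one appropriate vertex $v_{(\cdot),(\cdot)}$, and then check that each element of $\hat\S$ is the restriction of some set in $\S'$. The only cosmetic difference is that the paper notes one can use any copy $H_T$ for $T\in\S$ (taking the added vertex to be $v_{T,\tilde S}$), whereas you fix $T=S$; this changes nothing in the argument.
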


\begin{proof}
To see this, take any $T \in \S$ and note that $v_{T,S}$ has the
desired neighborhood in the subgraph of the graph of $\next(G,\S)$
induced by $V(G_T)\cup \{v_{T,S}\}$. 

Moreover by definition of $\next(G,\S)$, the stable sets in the
\gssp\ of $\next(G,\S)$ induced by $V(G_T)\cup \{v_{T,S}\}$ are
precisely $\S \cup \{v_{T,S}\}$, as desired.
\end{proof}

\begin{observation}
  If $(G_1,\S_1)$ and $(G_2,\S_2)$ are both induced sub\gssp s of
  $(H,\S)$, then for any $S\in \S_2$, $\join((G_1,\S_1), (G_2,\S_2), S)$ is an induced
  sub\gssp\ of $\next(H,\S)$.
\end{observation}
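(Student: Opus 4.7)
The plan is to construct an explicit embedding of $\join((G_1,\S_1), (G_2,\S_2), S)$ as an induced sub\gssp\ of $\next(H,\S)$. First I would invoke the hypothesis that $(G_2,\S_2)$ is an induced sub\gssp\ of $(H,\S)$ to select a stable set $\hat S \in \S$ with $\hat S \cap V(G_2) = S$. I would then identify the ``original'' copy of $H$ sitting inside $\next(H,\S)$ with $H$ itself, so that $G_2$ embeds into this original copy. To place $G_1$, I would use the copy $H_{\hat S}$ indexed by the chosen $\hat S$, which by construction is an isomorphic copy of $(H,\S)$, and embed $G_1$ inside $H_{\hat S}$ using the fact that $(G_1,\S_1)$ is an induced sub\gssp\ of $(H,\S)$.

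With these embeddings fixed, $V(G_1) \subseteq V(H_{\hat S})$ is disjoint from $V(G_2) \subseteq V(H)$, and $\next$ adds no edges between $V(H)$ and $V(H_{\hat S})$ -- only the new vertices $v_{\hat S, T}$ are joined to vertices of $H_{\hat S}$, and none of those lies in $V(G_1) \cup V(G_2)$. Hence the induced subgraph of $\next(H,\S)$ on $V(G_1) \cup V(G_2)$ is precisely the disjoint union $G_1 \sqcup G_2$, which matches the graph produced by the join.

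It remains to realize every element of $(\S_2 \setminus \{S\}) \cup \{S \cup S_1 : S_1 \in \S_1\}$ as the restriction of a stable set of $\next(H,\S)$ to $V(G_1) \cup V(G_2)$. For $S' \in \S_2 \setminus \{S\}$, I would pick $\hat S' \in \S$ with $\hat S' \cap V(G_2) = S'$; necessarily $\hat S' \ne \hat S$, since otherwise $S' = \hat S \cap V(G_2) = S$. Then for any $T \in \S(H_{\hat S'})$, the stable set $\hat S' \cup T$ of $\next(H,\S)$ meets $V(G_2)$ in exactly $S'$ and is disjoint from $V(G_1) \subseteq V(H_{\hat S})$, and so restricts to $S'$. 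For a set of the form $S \cup S_1$ with $S_1 \in \S_1$, I would pick $\hat S_1 \in \S(H_{\hat S})$ with $\hat S_1 \cap V(G_1) = S_1$; then $\hat S \cup \hat S_1$ is a stable set of $\next(H,\S)$ (it is in the family $\{S \cup T\}$) and its restriction equals $(\hat S \cap V(G_2)) \cup (\hat S_1 \cap V(G_1)) = S \cup S_1$.

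The main difficulty is purely bookkeeping: one has to ensure that the copy $H_{\hat S}$ housing $G_1$ is indexed by exactly the stable set $\hat S$ whose restriction to $G_2$ equals the distinguished set $S$ of the join. This coupling is precisely what the rule $\{S \cup T : S \in \S,\, T \in \S(H_S)\}$ in the definition of $\next$ is designed for -- it is what allows the pair $(\hat S, \hat S_1)$ to simulate the joined stable set $S \cup S_1$. The argument runs parallel to the preceding observation for $\add$, with $H_{\hat S}$ playing the role of the distinguished copy of $(H,\S)$ attached at $\hat S$.
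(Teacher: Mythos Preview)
Your proof is correct and follows essentially the same approach as the paper: embed $G_2$ in the original copy of $(H,\S)$ and $G_1$ in the copy $H_{\hat S}$ indexed by a lift $\hat S\in\S$ of $S$, then realise the joined stable sets via the family $\{S\cup T\}$ in $\next(H,\S)$. The only cosmetic difference is that, for $S'\in\S_2\setminus\{S\}$, the paper realises $S'$ using a stable set of the form $\hat S'\cup\{v_{\hat S',T}\}$ from the second family in the definition of $\next$, whereas you use one of the form $\hat S'\cup T$ with $T\in\S(H_{\hat S'})$ and $\hat S'\ne\hat S$; both choices restrict to $S'$ on $V(G_1)\cup V(G_2)$, so this is immaterial.
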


\begin{proof}
To see this, consider $\next(H,\S)$: the original copy of $(H,\S)$
contains an induced copy of $(G_2,\S_2)$, and the new copy
$(H_S,\S(H_S))$ contains an induced copy of $(G_1,\S_1)$. 

There exists $S' \in \S$ such that $S = S' \cap V(G_2)$ and for each
$S_1 \in \S_1$, there exists $S_1' \in \S$ such that $S_1 = S_1' \cap
V(G_1)$. Note that $S' \cup S_1'$ is a stable set of $\next(H,\S)$,
and that $S \cup S_1 = (S' \cup S_1') \cap (V(G_1) \cup V(G_2))$.
Moreover, for each $S_2 \in \S_2 - \{S\}$, there exists $S_2' \in \S$ such
that $S_2 = S_2' \cap V(G_2)$. Note that $S_2' \cup \{v_{S'_2,S'}\}$ is
a stable set of $\next(H,\S)$, and that $S_2 = (S_2' \cup
\{v_{S'_2,S'}\}) \cap (V(G_1) \cup V(G_2))$.

Consequently, $\join((G_1,\S_1), (G_2,\S_2), S)$ is an induced
  sub\gssp\ of $\next(H,\S)$.
\end{proof}

We sum up the previous observations into the following remark.

\begin{remark}
  If $(G_1,\S_1)$, $(G_2,\S_2)$ are constructible \gssp s, then for any
  $S \in \S_2$, $\add((G_2,\S_2), S)$ and $\join((G_1,\S_1), (G_2,\S_2),
  S)$ are constructible \gssp s.
\end{remark}


$\add$'s preservation of constructability has the following easy consequence.

\begin{lemma}\label{lem:tree}
  For any tree $T$, $(T,\{\{u\}\,|\,u\in T\})$ is a constructible \gssp.
\end{lemma}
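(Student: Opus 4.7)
The plan is to proceed by induction on $|V(T)|$, using the $\add$ operation to attach leaves one at a time.

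For the base case $|V(T)| = 1$, the tree $T$ consists of a single vertex $u$, and the pair $(T, \{\{u\}\})$ is literally equal to the starting \gssp\ $(G_0, \S_0) = (K_1, \{V(K_1)\})$, which is trivially constructible (it is an induced sub\gssp\ of $\next^0(G_0,\S_0)$).

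For the inductive step, let $T$ have $n \ge 2$ vertices, pick any leaf $v$ of $T$, and let $u$ be its unique neighbor. Set $T' = T - v$. By the induction hypothesis, $(T', \S')$ is constructible, where $\S' = \{\{w\} \,|\, w \in V(T')\}$. Since $u \in V(T')$, the singleton $\{u\}$ lies in $\S'$, so we may form
\[
\add\bigl((T', \S'),\, \{u\}\bigr) = (H, \S''),
\]
where, by the definition of $\add$, $H$ is obtained from $T'$ by adding a new vertex (which we label $v$) whose neighborhood is exactly $\{u\}$, and $\S'' = \S' \cup \{\{v\}\}$. By construction $H = T$ and $\S'' = \{\{w\} \,|\, w \in V(T)\}$, so this is exactly the \gssp\ we wanted. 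The Remark preceding the lemma guarantees that applying $\add$ to a constructible \gssp\ (on a stable set already present) yields a constructible \gssp, completing the induction.

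There is no real obstacle here: the only thing to check is that the stable set $\{u\}$ on which we apply $\add$ is actually a member of the stable set family at the moment of application, which follows immediately from the shape of the inductive family $\{\{w\} \,|\, w \in V(T')\}$. The choice of a leaf (available in any finite tree with at least two vertices) makes the reduction to $T'$ valid, since a leaf in $T$ has $T' = T - v$ still a tree.
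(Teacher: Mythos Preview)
Your proof is correct and follows exactly the approach of the paper, which simply says to ``start with the singleton \gssp\ and repeatedly apply $\add$ to build $T$.'' You have merely spelled out the induction and verified the hypotheses of $\add$ explicitly.
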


\begin{proof}
  Start with the singleton \gssp\ and repeatedly apply $\add$ to build $T$.
\end{proof}

We will now need the following decomposition result, which is a
direct consequence of the proof of Theorem~\ref{thm:alg}. 

\begin{corollary}\label{cor:cons}
For any connected multigraph $G$ such that some $\ge \! \!
2$-subdivision $H$ of $G$ is a \rbg, $G$ has a rooted block decomposition
where the root block has a feedback vertex, and for each block $B$
distinct from the root block, the parent cut-vertex of $B$ is a
feedback vertex of $B$.
\end{corollary}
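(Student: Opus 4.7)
The plan is to extract the statement directly from the analysis of the algorithm in the proof of Theorem~\ref{thm:alg}. Since some $\ge\!\!2$-subdivision $H$ of $G$ is a \rbg, the correctness of that algorithm forces it to output \textbf{yes} on input $G$. In particular, step~(1) of the algorithm must reduce the block tree of $G$ all the way to a single block $B_0$ (otherwise the algorithm would halt at step~(2) with answer \textbf{no}), and step~(3) must then find a feedback vertex of $B_0$ (otherwise it would answer \textbf{no} at step~(3)).

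I root the block decomposition of $G$ at $B_0$. By construction, the root block $B_0$ has a feedback vertex, so it remains to check that every non-root block $B$ has, as its parent cut-vertex in this rooted decomposition, a feedback vertex of $B$. Fix $B\ne B_0$. The block $B$ was removed from the block tree at some step of the peeling in~(1), while it was a leaf of the current partially-peeled tree; the condition for removal forced its parent cut-vertex \emph{at the moment of removal}—call it $v$—to be a feedback vertex of $B$.

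The only conceptual point to verify is that this $v$ coincides with the parent cut-vertex of $B$ in the rooted block tree of $G$. This is true because the peeling only ever deletes leaf blocks, never internal nodes of the block tree; hence the unique path from $B$ to $B_0$ in the block tree of $G$ is preserved intact up until $B$ itself is removed. The cut-vertex adjacent to $B$ on this path is therefore the same before any peeling and at the moment $B$ is peeled, and by definition this is the parent cut-vertex of $B$ when the tree is rooted at $B_0$. Identifying these two notions of ``parent cut-vertex'' is the main (and essentially only) thing that needs care; everything else follows from invoking Theorem~\ref{thm:alg} in the contrapositive direction.
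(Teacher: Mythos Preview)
Your argument is correct and is exactly what the paper intends: the paper gives no separate proof of Corollary~\ref{cor:cons}, merely stating that it is ``a direct consequence of the proof of Theorem~\ref{thm:alg}.'' You have faithfully unpacked that consequence, including the one point that requires care---that the cut-vertex adjacent to a leaf block $B$ at the moment of peeling coincides with its parent cut-vertex in the block tree rooted at the surviving block $B_0$.
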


 This decomposition of $G$ induces a decomposition of $H$ with the
 same properties. We insist on the fact that this decomposition of $H$
 is not a block decomposition as defined earlier, since a block
 consisting of a single edge in $G$ corresponds to a block consisting
 of a path on at least 3 edges in $H$ (such a path is not
 2-connected). However, any block distinct from a single edge in $G$
 corresponds to a 2-connected block in $H$. We will refer to this
 decomposition of $H$ as a \emph{pseudo-decomposition}, and the $\ge
 \! \! 2$-subdivision of each block of $G$ will be called a
 \emph{pseudo-block} of $H$.

Note that this pseudo-decomposition has the additional property that any
cut-vertex of $G$ distinct from $r$ (the feedback vertex of the root
block) is at distance at least 3 from $r$ in $H$. In what follows, $r$
will be simply called \emph{the root} of $H$.

\smallskip

Our proof of Theorem~\ref{thm:construction} uses the following technical lemma which can be thought of as a strenghening of Theorem~\ref{thm:construction} that is better adapted for a proof by induction. 

\begin{lemma}\label{lem:inter}
Let $H$ be a connected \rbg\ that is a $\ge \! \! 2$-subdivision of some
multigraph $G$, and let $r$ be the root of $H$. Then
there is a set $\S$ of stable sets of $H- N[r]$ containing all singletons
$\{v\}$, where $v$ is at distance 2 from $r$ in $H$, such that the
\gssp\ $(H- N[r],\S)$ is constructible.
\end{lemma}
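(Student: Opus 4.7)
The plan is induction on the number of pseudo-blocks of $H$ in the pseudo-decomposition given by Corollary~\ref{cor:cons}, using crucially the additional property that every cut-vertex of $G$ distinct from $r$ lies at distance at least $3$ from $r$ in $H$. For the base case, $H$ consists of a single pseudo-block, hence is a chandelier with pivot $r$ (or a path with endpoint $r$) since $r$ is a feedback vertex of this block; then $H - N[r]$ is a tree, and Lemma~\ref{lem:tree} together with its proof makes $(H - N[r], \S)$ constructible with $\S$ containing all singletons, which in particular covers all singletons of distance-2-from-$r$ vertices.

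For the inductive step I would pick a leaf pseudo-block $B$ with parent cut-vertex $v_B$ (a feedback vertex of $B$ by Corollary~\ref{cor:cons}) and set $H' = H - (B - v_B)$, so that $H'$ has one fewer pseudo-block. By the distance property, either $v_B = r$ or $d_H(v_B, r) \geq 3$. In the easy case $v_B = r$, the graph $H - N[r]$ is the disjoint union of $B - N[r]$ and $H' - N[r]$; applying induction to both and combining via a \join\ on the empty stable set yields the required \gssp. In the other case $v_B \neq r$, both $v_B$ and all its $H$-neighbors $n_i^B$ lie in $H - N[r]$; inducting on $B$ (viewed as a single-pseudo-block \rbg\ with root $v_B$) gives $(B - N[v_B], \S_B)$ constructible with singletons $\{w_i^B\}$ for the distance-2-from-$v_B$ vertices $w_i^B$ (which are the unique neighbors of the $n_i^B$'s in $B - N[v_B]$), while inducting on $H'$ gives $(H' - N[r], \S_{H'})$ constructible.

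The heart of the argument, and the main obstacle, is combining these when $v_B \neq r$: we must attach $B - v_B$ to $v_B \in H' - N[r]$ through the edges $v_B n_i^B$. Since \add\ creates only new vertices and \join\ produces only disjoint unions, neither can add these edges between existing vertices. My plan is therefore to \add\ $v_B$ last. Concretely, I would first extend $(B - N[v_B], \S_B)$ to $(B - v_B, \S_B^\ast)$ by sequentially \add-ing each $n_i^B$ adjacent to $\{w_i^B\} \in \S_B$, so that each $\{n_i^B\}$ lands in $\S_B^\ast$; then take the induced sub\gssp\ of $(H' - N[r], \S_{H'})$ on $H' - N[r] - v_B$; then combine these two via a sequence of \join\ operations on singletons engineered to produce the stable set $N_H(v_B) = \{n_1^B, n_2^B, \ldots\} \cup N_{H'}(v_B)$ inside the combined $\S$; and finally \add\ $v_B$ with neighborhood $N_H(v_B)$ to obtain $(H - N[r], \S)$.

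The obstacle I expect is the destructive nature of \join: applied with distinguished stable set $S$, it replaces $S$ by the combined family $\{S \cup S_1\}$, which can eliminate singletons or other stable sets required for subsequent \add\ or \join\ steps. To make the combining schedule work, I anticipate needing to strengthen the inductive hypothesis to guarantee not only the distance-2-from-$r$ singletons but also certain ``interface'' stable sets in $\S$, in particular the set $N_H(v) \cap (H - N[r])$ for every non-root cut-vertex $v$ of $G$ lying in $H - N[r]$. With such a strengthening, the combining sequence above can be carried out while preserving all the required distance-2-from-$r$ singletons in the final $\S$.
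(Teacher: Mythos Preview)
There is a genuine gap at the ``combine via a sequence of \join\ operations'' step. Even granting your strengthening (so that $N_{H'}(v_B)$ lies in $\S_{H'}$), a single $\join$ of $(B-v_B,\S_B^\ast)$ onto $N_{H'}(v_B)$ produces only the sets $N_{H'}(v_B)\cup S_1$ for each $S_1\in\S_B^\ast$ individually. Since $\S_B^\ast$ contains the singletons $\{n_i^B\}$ but not their union $\{n_1^B,\dots,n_k^B\}$, you never obtain the full $N_H(v_B)$ when $v_B$ has more than one neighbour in $B$. There is no way to repair this with further operations: $\join$ always takes two disjoint \gssp s, so once the connected tree $B-v_B$ is glued in you cannot join into it again, while $\add$ and passing to induced sub\gssp s never merge two stable sets already living in the same \gssp. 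Your proposed strengthening is also not clearly inductively maintainable, since after peeling $B$ the vertex $v_B$ may cease to be a cut-vertex of the underlying multigraph of $H'$.

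The paper sidesteps this by a different decomposition and a different combining trick. It splits $H$ at all cut-vertices $s_1,\dots,s_\ell$ of $G$ lying in the root block $R$ simultaneously (recursing on each hanging piece $H_{s_i}^j$ with $s_i$ as its new root), then starts from the tree $R-N[r]$ with all singletons and, for each $i$, performs $\join\bigl((H_{s_i},\S_{s_i}),\,(H_{i-1},\S_{i-1}),\,\{s_i\}\bigr)$. The key point is that the cut-vertex $s_i$ is \emph{kept in place}: the join on the singleton $\{s_i\}$ creates exactly the two-element sets $\{s_i,u\}$ for each $u$ at distance~$2$ from $s_i$, and each missing middle vertex has degree~$2$ with neighbourhood precisely $\{s_i,u\}$, so a single $\add$ on that set inserts it. No stable set larger than two is ever required, and the distance-$2$-from-$r$ singletons survive because (by the remark preceding the lemma) no $s_i$ is at distance~$2$ from $r$.
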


\begin{proof}
We prove the result by induction on the number of vertices of $H$. If
$H$ is 2-connected then $H-N[r]$ is a tree and the result follows
from Lemma~\ref{lem:tree}. Otherwise, for any cut-vertex $s$ of $G$
lying in the root block $R$, let $R,H_s^1,\ldots,H_s^k$ be the
subgraphs of $H$ induced by the vertex-set of each component of $H-s$
together with $s$. By induction, for each $s$ and each $i$, there
is a set $\S_s^i$ of stable sets of $H_s^i-N[s]$ including all
singletons $\{v\}$, where $v$ is at distance 2 from $s$ in $H_s^i$,
and such that $(H_s^i-N[s],\S_s^i)$ is constructible. Consequently, it
follows from the fact that the disjoint union of two constructible
\gssp s is a constructible \gssp \ (see the remark above on joining on
the empty set), that for any $s$, the disjoint union of all
$(H_s^i-N[s],\S_s^i)$, $i\ge 1$, forms a constructible \gssp. We will
refer to this \gssp\ as $(H_s,\S_s)$.

By Lemma~\ref{lem:tree}, $H_0=R-N[r]$ together with the set
$\S_0=\{\{v\}\,|\,v \in R-N[r] \}$ is a constructible \gssp. Let
$s_1,\ldots,s_\ell$ be the cut-vertices of $G$ lying in $R-r$. We
define two graphs $G_i$ and $H_i$ and a family of stable sets $\S_i$
iteratively as follows. For $i=1\ldots \ell$, let $(G_i,\S_i)$ be
obtained by joining $(H_{s_i},\S_{s_i})$ and $(H_{i-1},\S_{i-1})$ on
$\{s_i\}$. This \join\ operation creates stable sets $\{s_i,u\}$ for
all vertices $u$ at distance 2 from $s_i$ in $H_{s_i}$ (while $\S_i$
still contains singletons $\{s_j\}$ for any $j>i$). Let $H_i$ be
obtained from $G_i$ by adding, for each such pair $\{s_i,u\}$ a new
vertex adjacent to $s_i$ and $u$. We now define a \gssp\ $(H',\S')$ as
follows: if $r$ is not a cut-vertex in $G$, then
$(H',\S')=(H_\ell,\S_\ell)$, and otherwise $(H',\S')$ is the
\gssp\ obtained by taking the disjoint union of $(H_\ell,\S_\ell)$ and
$(H_r,\S_r)$ (i.e., joining them on the empty set).

It follows from the fact that $H$ is a $\ge \! \! 2$-subdivision of some
multigraph, that $H'$ is precisely $H-N[r]$ and $\S'$ has the desired
property (since vertices at distance two from $r$ in $H$ are not
cut-vertices).
\end{proof}

It remains to prove Theorem~\ref{thm:construction}.

\smallskip

\noindent \emph{Proof of Theorem~\ref{thm:construction}.} Observe that
$(H,\S)$ is constructible for some set $\S$ of stable sets of $H$ if
and only if all its connected components are. Hence, it is enough to
prove the theorem when $H$ is connected. In this case, by
Lemma~\ref{lem:inter}, there is a set $\S'$ of stable sets of
$H-N[r]$, where $r$ is the feedback vertex of the root pseudo-block of
the pseudo
decomposition, such that $\S'$ contains all singletons $\{v\}$ where
$v$ is at distance two from $r$ in $H$. Let $H_r$ consist of a single
vertex $r$, and $\S_r=\{\{r\}\}$.  We join the constructible
\gssp\ $(H-N[r],\S')$ and $(H_r,\S_r)$ on $\{r\}$, and the obtained
\gssp\ contains pairs $\{u,r\}$ for any $u$ is at distance two from
$r$ in $H$. The graph obtained by adding a vertex adjacent to $u$ and
$r$, for every such pair, is precisely $H$. \qed

\medskip

Combining Theorems~\ref{thm:alg}
and~\ref{thm:construction}, and the discussion about joining on the
empty set, we obtain the following immediate corollary.

\begin{corollary}
For any multigraph $G$, either all $\ge \! \! 2$-subdivisions of $G$
appear as induced subgraphs in the construction of Burling and Pawlik
et al., or none of them do.
\end{corollary}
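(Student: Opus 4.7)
The plan is to combine Theorem~\ref{thm:alg} and Theorem~\ref{thm:construction} directly, handling the reduction to the connected case and verifying that the construction lives inside the class of \rbg s.

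First I would reduce to the case of a connected multigraph $G$. If $G$ has connected components $G_1,\ldots,G_k$, then any $\ge \! \! 2$-subdivision $H$ of $G$ is the disjoint union of $\ge \! \! 2$-subdivisions $H_i$ of the $G_i$. The paper notes that a disjoint union of graphs is a \rbg\ iff each summand is, and the discussion of \emph{joining on the empty stable set} shows that the disjoint union of two constructible \gssp s is constructible, and that induced subgraphs of the construction are closed under disjoint union. Hence the dichotomy for $G$ follows from the same dichotomy applied to each $G_i$ separately.

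Now assume $G$ is connected. By Theorem~\ref{thm:alg}, either every $\ge \! \! 2$-subdivision of $G$ is a \rbg, or none of them are. In the first case, let $H$ be any $\ge \! \! 2$-subdivision of $G$. Then $H$ is a \rbg, so Theorem~\ref{thm:construction} produces a set $\S$ of stable sets such that $(H,\S)$ is constructible, meaning $(H,\S)$ is an induced sub\gssp\ of some $\next^i(G_0,\S_0)$. Forgetting the stable sets, $H$ appears as an induced subgraph of the $i$-th graph in the Burling--Pawlik et al.\ construction, as required.

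In the second case, suppose for contradiction that some $\ge \! \! 2$-subdivision $H$ of $G$ appears as an induced subgraph of a graph $\Gamma$ in the construction. The paper's setup (and the introduction's central observation, exploited throughout) is that every graph $\Gamma$ produced by the construction admits a representation as a \rbg. Since induced subgraphs of \rbg s are \rbg s (delete the corresponding frames from the representation), $H$ would itself be a \rbg, contradicting our assumption. Thus no $\ge \! \! 2$-subdivision of $G$ appears as an induced subgraph of the construction.

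The argument has essentially no hidden obstacle once Theorems~\ref{thm:alg} and~\ref{thm:construction} are available; the only point to be careful about is the backward direction, where one must explicitly invoke the fact that the construction produces \rbg s and that the class of \rbg s is closed under taking induced subgraphs. Both facts are built into the framework developed earlier in the paper, so the corollary follows immediately.
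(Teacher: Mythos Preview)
Your proof is correct and follows exactly the approach the paper indicates: it combines Theorems~\ref{thm:alg} and~\ref{thm:construction}, uses the discussion about joining on the empty stable set to handle disconnected $G$, and for the backward direction appeals to the fact that the construction is contained in the class of \rbg s, which is closed under induced subgraphs. The paper states this corollary as immediate from those ingredients, and your write-up simply spells out the details.
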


\end{document}